\DeclareMathOperator{\per}{per}
\def\modd#1 #2{#1\ \mbox{\rm (mod}\ #2\mbox{\rm )}}
\DeclareMathOperator{\cexp}{ce}
\DeclareMathOperator{\AO}{AO}
\DeclareMathOperator{\AN}{AN}
\newcommand{\br}{\overline{r}}
\newcommand{\bs}{\overline{s}}
\newcommand{\bt}{\overline{t}}
\newcommand{\bu}{\overline{u}}
\newcommand{\bx}{\overline{x}}
\author[Aseem Baranwal et al.]{Aseem Baranwal\affiliationmark{1}
  \and James Currie\affiliationmark{2}\thanks{The research of JC is supported by
  NSERC Grant 2017-03901.}
  \and Lucas Mol\affiliationmark{3}
  \and Pascal Ochem\affiliationmark{4}\\
  \and Narad Rampersad\affiliationmark{2}\thanks{The research of NR is supported by NSERC Grant 2019-04111.}
  \and Jeffrey Shallit\affiliationmark{1}\thanks{The research of JS is supported by NSERC Grant 2018-04118.}}
\title[Antisquares and Critical Exponents]{Antisquares and Critical Exponents}
\affiliation{
  % one line per affiliation, no postal codes, grant numbers or similar
University of Waterloo, Canada\\
University of Winnipeg, Canada\\
Thompson Rivers University, Canada\\
LIRMM, CNRS, Universit\'e de Montpellier, France}
\keywords{antisquare, critical exponent, binary complement, binary word, avoidability,
repetition threshold, enumeration, minimal forbidden word}
\begin{document}
% This is only used if you are compiling for a volume before vol 25
% \publicationdetails{VOL}{2015}{ISS}{NUM}{SUBM}
% This is the new form of collecting the data, starting with vol 25

\publicationdata{vol. 25:2 }{2023}{11}{10.46298/dmtcs.10063}{2022-09-20; 2022-09-20; 2023-07-27}{2023-08-02}

\theoremstyle{plain}
\newtheorem{theorem}{Theorem}
\newtheorem{corollary}[theorem]{Corollary}
\newtheorem{lemma}[theorem]{Lemma}
\newtheorem{proposition}[theorem]{Proposition}

\theoremstyle{definition}
\newtheorem{definition}[theorem]{Definition}
\newtheorem{example}[theorem]{Example}
\newtheorem{conjecture}[theorem]{Conjecture}
\newtheorem{problem}[theorem]{Problem}

\theoremstyle{remark}
\newtheorem{remark}[theorem]{Remark}

\maketitle

\begin{abstract}
The (bitwise) complement $\overline{x}$ of a binary word $x$ is obtained by changing each $0$ in $x$ to $1$ and vice versa. An {\it antisquare\/} is a nonempty word of the form $x\, \overline{x}$.  In this paper, we study infinite binary words that do not contain arbitrarily large antisquares.
For example, we show that the repetition threshold for the 
language of infinite binary words containing 
exactly two distinct antisquares is
$(5+\sqrt{5})/2$. We also study
repetition thresholds for related classes, where ``two'' in the previous sentence is replaced by a larger number.

We say a binary word is {\it good\/} if the only antisquares it contains are $01$ and $10$. We characterize the minimal antisquares, that is, those words that are antisquares but all proper factors are good.   We determine the growth rate of the number of good words of length $n$ and determine the repetition threshold between polynomial and exponential growth for the number of good words.
\end{abstract}

\section{Introduction}

Let $x$ be a finite nonempty binary word.  We say
that $x$ is an {\it antisquare} if there exists
a word $y$ such that $x = y \, \overline{y}$, where
the overline denotes a morphism that maps
$0 \rightarrow 1$ and $1 \rightarrow 0$.
For example, $011100$ is an antisquare.  The {\it order\/} of an antisquare $y\, \overline{y}$ 
is defined to be $|y|$, where $|y|$ denotes the length of $y$.

Avoidance of antisquares has been studied previously in combinatorics on words.  For example, \cite{Mousavi&Schaeffer&Shallit:2016}
proved that the infinite Fibonacci word
$${\bf f } = 01001010 \cdots,$$
the fixed point of the morphism
$0 \rightarrow 01$, $1 \rightarrow 0$, has
exactly four antisquare factors, namely,
 $01, 10, 1001$, and $10100101$.   More generally,
 all the antisquares in Sturmian words have
 recently been characterized in \cite{Hieronymi:2022}.
On the other hand, \cite{Ng&Ochem&Rampersad&Shallit:2019} classified those infinite binary words containing the
minimum possible numbers of distinct squares and antisquares.

It is easy to see that no infinite binary word,
except the trivial families given by
$(0+\epsilon) 1^\omega$ and $(1+\epsilon) 0^\omega$, can contain
at most one distinct antisquare. 
(Here the notation $x^\omega$ refers to the right-infinite word $xxx\cdots$.) However, once we move to two distinct antisquares, the situation is quite different.  We have the following:
\begin{proposition}
There are exponentially many finite binary words of length $n$ having at most two distinct antisquares, and there are uncountably many infinite binary words with the same property.
\end{proposition}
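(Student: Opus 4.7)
I would prove the proposition by exhibiting an explicit family of finite binary words whose only antisquares are $01$ and $10$, and which grows exponentially in the length, together with an uncountable right-infinite analogue.

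Specifically, for each $k \ge 1$ and each choice $(a_1,\ldots,a_k) \in \{3,4\}^k$, consider the word
\[
w = 0^{a_1}\, 1\, 0^{a_2}\, 1\, \cdots\, 1\, 0^{a_k},
\]
and its obvious right-infinite analogue when $(a_i)_{i \ge 1} \in \{3,4\}^\omega$. The reason for insisting $a_i \ge 3$ is that the $1$s of $w$ are isolated and any two consecutive $1$s are separated by at least three $0$s, so the minimum gap between successive $1$s is $4$.

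The central step is to show that no such $w$ contains an antisquare of order $\ge 2$. I would use a simple density estimate: any factor of $w$ of length $\ell$ contains at most $\lfloor (\ell-1)/4 \rfloor + 1$ occurrences of the letter $1$. On the other hand, an antisquare $x\,\overline{x}$ of order $n$ has exactly $n$ ones among its $2n$ letters, so its occurrence would require $n \le \lfloor(2n-1)/4\rfloor + 1$, an inequality that fails for every $n \ge 2$. Hence the only antisquares that $w$ can contain are $01$ and $10$, and these are indeed present whenever $k \ge 2$.

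For the two enumerative conclusions: fixing $k$ produces $2^k$ distinct words of lengths in $[4k-1,\, 5k-1]$, and the length-$n$ count is a sum of binomial coefficients $\binom{k}{n-4k+1}$ over $k$ in a range of width $\Theta(n)$; choosing $k$ so that the entry is central gives a standard $2^{\Theta(n)}$ estimate. For the infinite family, the map $(a_i)_{i \ge 1} \mapsto 0^{a_1} 1 0^{a_2} 1 \cdots$ from $\{3,4\}^\omega$ into infinite binary words is manifestly injective, and $\{3,4\}^\omega$ has cardinality $2^{\aleph_0}$. The only delicate point I anticipate is cleanly stating and applying the density inequality so as to dispatch antisquares of all orders $\ge 2$ in one stroke; after that, everything else is routine counting.
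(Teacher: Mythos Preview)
Your proposal is correct and takes essentially the same approach as the paper: the paper uses the family $\{1000,10000\}^*$ (and $\{1000,10000\}^\omega$ for the uncountable part), which up to a harmless shift of where the blocks begin is exactly your family $0^{a_1}1\,0^{a_2}1\cdots$ with $a_i\in\{3,4\}$. The paper simply asserts that such words have only $01$ and $10$ as antisquares (``it is easy to see''), whereas your density inequality $n \le \lfloor(2n-1)/4\rfloor+1$ gives a clean one-line justification of that fact.
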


\begin{proof}
It is easy to see that every binary word in
$\{1000,10000\}^*$ has only the antisquares
$01$ and $10$, which proves the first claim.

For the second, consider the uncountable set of
infinite words $\{1000,10000\}^\omega$.  (Here, by
$S^\omega$ for a set $S$ of nonempty
finite words, we mean
the set of all infinite words arising
from concatenations of elements of $S$.)
\end{proof}

Furthermore, it is easy to see that if an infinite binary word, other than $001^{\omega}$ and its complement, contains exactly two antisquares, then these antisquares must be $01$ and $10$.
Call a binary word \emph{good} if it contains no antisquare factors, except possibly $01$ and $10$.
This suggests studying the following problem.
\begin{problem}
Find the repetition threshold for good words.
\label{prob1}
\end{problem}

The repetition threshold for a class
of (finite or infinite) words is defined as follows.  First, we
say that a finite word $w=w[1..n]$ has {\it period\/}
$p\geq 1$ if $w[i]=w[i+p]$ for $1 \leq i \leq n-p$.
The smallest period of a word $w$ is called
{\it the\/} period, and we write it as
$\per(w)$.  The {\it exponent\/} of a finite word
$w$, written $\exp(w)$ is defined to be
$|w|/\per(w)$.   We say a word
(finite or infinite) is
{\it $\alpha$-free\/} if the exponents of its nonempty factors are all $<\alpha$.   We say a word is {\it $\alpha^+$-free\/} if the exponents of its nonempty factors are all $\leq\alpha$.  The {\it critical exponent\/} of
a finite or infinite word $x$ is the supremum, over all
nonempty finite factors $w$ of $x$, of
$\exp(w)$; it is written $\cexp(x)$. Finally, the {\it repetition
threshold} for a language $L$ of infinite words is
defined to be the infimum, over all
$x \in L$, of $\cexp(x)$.

The critical exponent of a word can be either rational or irrational.   If it is rational, then it can either be attained by a particular finite factor, or not attained.  For example, the critical exponent of both
\begin{itemize}
    \item the Thue-Morse word
    ${\bf t} = 0110100110010110100101100 \cdots$,
    fixed point of the morphism $0 \rightarrow 01$, $1 \rightarrow 10$; and
    \item the variant Thue-Morse word
    ${\bf vtm} = 2102012101202102012021012\cdots$,
    fixed point of the morphism
    $2 \rightarrow 210$, $1 \rightarrow 20$, $0 \rightarrow 1$
\end{itemize}
is $2$, but it is attained in the former case and not attained in the latter.  If the critical exponent $\alpha$ is attained, we typically write it as $\alpha^+$.

In 1972, \cite{Dejean:1972} wrote a classic paper on combinatorics on words, where she determined the repetition threshold for the language of all infinite words over $\{ 0, 1, 2 \}$---it is $\tfrac74^+$---and conjectured the value of the repetition threshold for the languages $\Sigma_k^*$ for $k \geq 4$,
where $\Sigma_k = \{ 0,1,\ldots, k-1 \}$.
Dejean's conjecture was only completely resolved in 2011, in
\cite{Rao:2011} and \cite{Currie&Rampersad:2011}, independently.   

The repetition threshold has been studied for many classes of words.  To name a few, there are the
\begin{itemize}
    \item Sturmian words, studied in \cite[Prop.~15]{Carpi&deLuca:2000};
    \item palindromes, studied in \cite{Shallit:2016};
    \item rich words, studied in \cite{Currie&Mol&Rampersad:2020};
    \item balanced words, studied in \cite{Rampersad&Shallit&Vandomme:2019}
    \cite{Dvorakova&Opocenska&Pelantova&Shur:2022}; and
    \item complementary symmetric Rote words, studied in
    \cite{Dvorakova&Medkova&Pelantova:2020}.
\end{itemize}
For variations on and generalizations of repetition threshold, see 
\cite{Ilie&Ochem&Shallit:2005,Badkobeh&Crochemore:2011,Fiorenzi&Ochem&Vaslet:2011,Samsonov&Shur:2012,Mousavi&Shallit:2013}.

The goal of this paper is to study the repetition threshold for two classes of infinite words:
\begin{itemize}
    \item $\AO_\ell$, the binary words avoiding all antisquares of order $\geq \ell$;
    \item $\AN_n$, the binary words with no more than $n$ antisquares.
\end{itemize}
It turns out that there is an interesting and subtle hierarchy, depending on the values of $\ell$ and $n$.

Our work is very similar in flavor to that of
\cite{Shallit:2004}, which found a similar hierarchy concerning critical exponents and sizes of squares avoided.  The hierarchy for antisquares, as we will see, however, is significantly more complex.

In this paper, in Sections~\ref{two} and \ref{three}, we solve Problem~\ref{prob1}, and show
that the repetition threshold for good words is $2+\alpha$,
where $\alpha = (1+\sqrt{5})/2$ is the golden ratio.

Proving that the repetition threshold for a class of infinite words equals some real number $\beta$ generally consists of two parts:  first, an explicit construction of a word avoiding $\beta^+$ powers.  This is often carried out by finding an appropriate morphism $h$ whose infinite fixed point $\bf x$ (or an image of $\bf x$ under a second morphism) has the desired property.   Second,
if $\beta$ is rational, then one can prove there is no infinite word
avoiding $\beta$-powers by a breadth-first or depth-first search of the infinite tree of all words.   If $\beta$ is irrational, however, one must generally be more clever.

In Section~\ref{AO_l} we determine the repetition threshold for binary words avoiding all antisquares of order $\geq \ell$,
and in Section~\ref{AN_n} we determine the repetition threshold for binary words with no more than $n$ antisquares.
In Section~\ref{manti} we completely characterize the minimal antisquares; i.e., the binary words that are antisquares but have
the property that all proper factors are good.  This characterization is then used in Section~\ref{enum}, where
we determine the growth rate of the number of good words of length $n$.
In this section we also show that the repetition threshold between polynomial and exponential growth for good words avoiding $\alpha$-powers is $\alpha=\tfrac{15}4$; i.e., there are exponentially many
such words avoiding $\tfrac{15}4^+$-powers, but only polynomially many that avoid $\tfrac{15}4$-powers.

\section{A good infinite word with critical exponent \texorpdfstring{$2+\alpha$}{2+alpha}}
\label{two}

Consider the morphisms below:
\begin{center}
\begin{tabular}{ r l c r l }
 $\varphi$: & $0 \mapsto001$ & & $g$: & $0 \mapsto01$ \\
         & $1 \mapsto01$ & & & $1 \mapsto11$.
\end{tabular}
\end{center}
Let us write $\varphi^\omega(0)$ for the 
(unique) infinite fixed point of $\varphi$ that
starts with $0$.
We claim that the infinite word ${\bf w} = g(\varphi^\omega(0))$ does not have antisquares
other than $01$ and $10$, and has critical exponent $2 + \alpha$, where
$\alpha = (1+\sqrt{5})/2$.  The infinite word ${\bf w}$ is Fibonacci-automatic, in the sense
of \cite{Mousavi&Schaeffer&Shallit:2016}, so we can apply the {\tt Walnut} theorem-prover \cite{Mousavi:2016} to establish this claim.  For more about {\tt Walnut}, see \cite{Shallit:2022}.  

We start with the Fibonacci automaton for
$\varphi^\omega(0)$, as displayed in Figure~\ref{ffaut}.
\begin{figure}[H]
\begin{center}
    \includegraphics[width=5in]{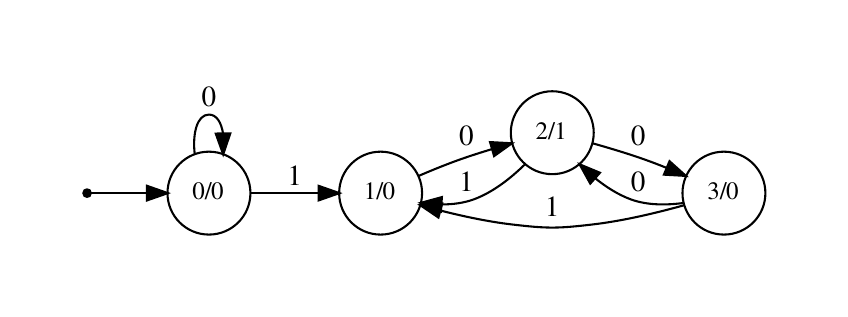}
\end{center}
\caption{Fibonacci automaton for $\varphi^\omega(0)$.}
\label{ffaut}
\end{figure}

Let us name the above automaton {\tt FF.txt}, and store it in the {\tt Word Automata Library} of
{\tt Walnut}.
We can verify the correctness of this automaton as follows.  First, we claim that $\varphi^\omega(0)=0{\bf f}$.  To see this, let $f$ denote the morphism that maps $0\to 010, 1\to 01$; i.e., the morphism $f$
is the square of the Fibonacci morphism.  One can easily verify the identities
$\varphi^n(0) = 0f^n(0)0^{-1}$ and $\varphi^n(01) = 0f^n(10)0^{-1}$ by simultaneous induction,
whence follows the claim.  We can then use Walnut to verify the correctness of the automaton
{\tt FF} with the command
\begin{verbatim}
eval verifyFF "?msd_fib FF[0]=@0 & Ai FF[i+1]=F[i]":
\end{verbatim}
which returns {\tt TRUE}.

Now we can use {\tt Walnut} to create a Fibonacci automaton for $\bf w$.
\begin{verbatim}
morphism g "0->01 1->11":
image GF g FF:
\end{verbatim}
The resulting automaton is called {\tt GF.txt} and is displayed in Figure~\ref{gfaut}.
\begin{figure}[H]
\begin{center}
    \includegraphics[width=5.5in]{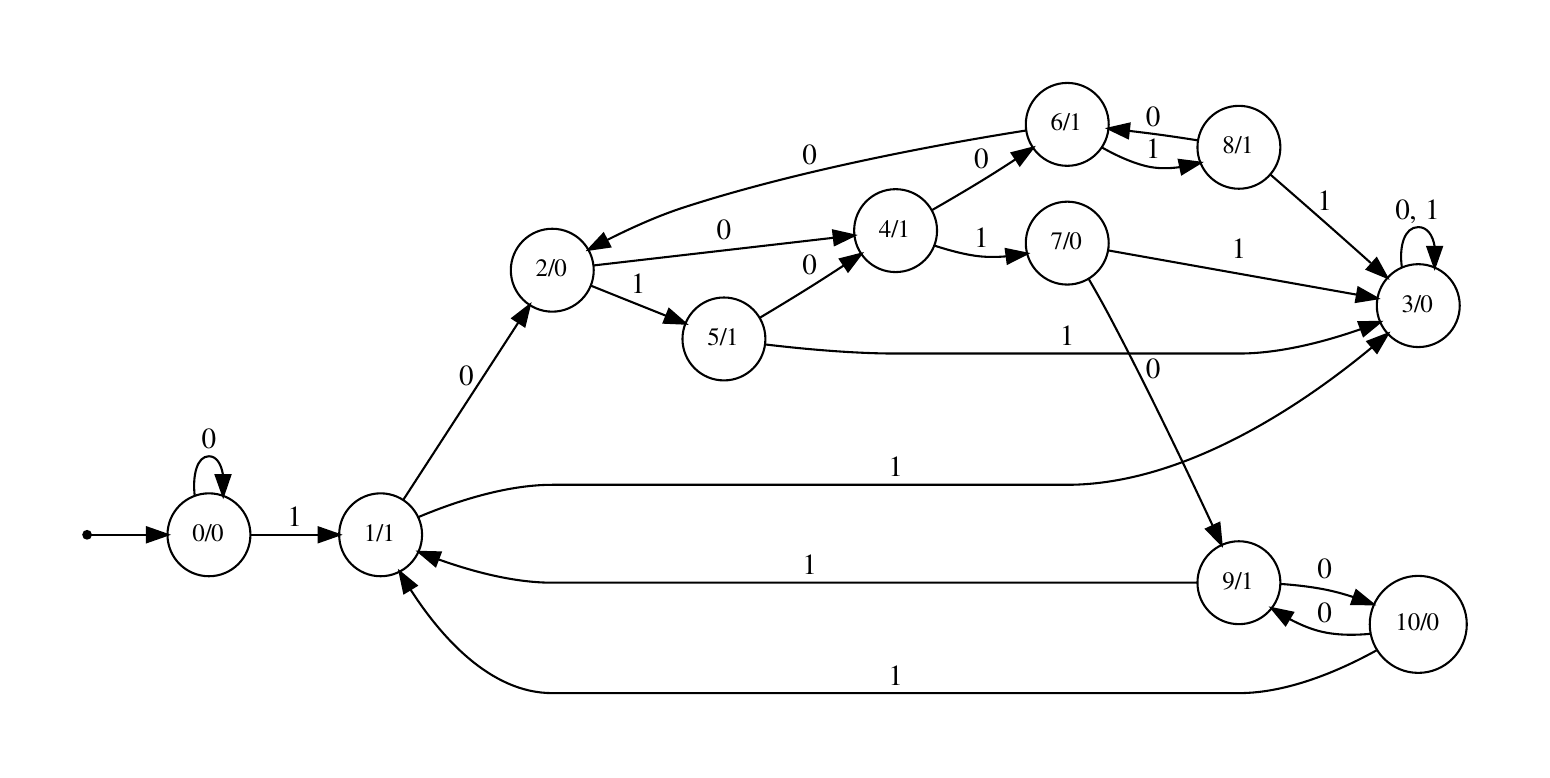}
\end{center}
\caption{Fibonacci automaton for ${\bf w} = g(\varphi^\omega(0))$.}
\label{gfaut}
\end{figure}

\begin{theorem}
The word ${\bf w}$ does not contain antisquares other than $01$ and $10$, and has critical exponent $2 + \alpha$.
\end{theorem}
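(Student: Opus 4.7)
The plan is to verify both claims with the Walnut theorem prover, using the automaton $\mathtt{GF}$ constructed above to access the letters of $\mathbf{w}$ inside first-order logic over the Fibonacci numeration system.

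For the antisquare claim, I would formulate the Walnut predicate asserting that no factor of length $2n \geq 4$ is an antisquare, namely
\[
\forall i\,\forall n\,\bigl(n \geq 2 \;\Rightarrow\; \exists j \; (j < n \wedge \mathbf{w}[i+j] = \mathbf{w}[i+n+j])\bigr),
\]
and check that Walnut returns \texttt{TRUE}. A short existential query confirms that $01$ and $10$ themselves do occur as factors of $\mathbf{w}$, so the antisquare set of $\mathbf{w}$ is exactly $\{01,10\}$.

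For the critical exponent, I would define the Walnut predicate $P(n,p)$ stating that $\mathbf{w}$ contains a factor of length $n$ with period $p$, namely $\exists i\,\forall j\,(j+p < n \Rightarrow \mathbf{w}[i+j] = \mathbf{w}[i+j+p])$, and aim to show that $\sup\{n/p : P(n,p)\} = 2+\alpha$. Since $2+\alpha = (5+\sqrt{5})/2$ is irrational the supremum is not attained, so the task splits into an upper bound---whenever $P(n,p)$ holds, $n < (2+\alpha)p$---and a lower bound---an explicit sequence of factors whose $n/p$ ratios tend to $2+\alpha$. The lower bound is the straightforward half: one exhibits a Fibonacci-indexed family of high-exponent factors, parallel to the well-known such family in the Fibonacci word itself, and verifies the period and length relations by a single existential Walnut query.

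The main obstacle is the irrational upper bound. The standard technique in the Fibonacci-automatic setting is to use Walnut to construct the automaton whose inputs are the pairs $(n,p)$ with $P(n,p)$, and then to verify a linear inequality of the form $a\,n < b\,p + c$ whose rational ratio $b/a$ is a convergent of $2+\alpha$ chosen tight enough to force the supremum to be exactly $2+\alpha$; equivalently, one builds the automaton accepting those $(n,p)$ for which $n/p < 2+\alpha$ would fail and checks it is disjoint from the automaton for $P$. Once this mechanical verification succeeds, combining it with the lower bound above establishes $\cexp(\mathbf{w}) = 2+\alpha$ and completes the theorem.
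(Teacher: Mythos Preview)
Your antisquare argument is essentially the same as the paper's. The gap is in the upper bound for the critical exponent. A single inequality $a\,n < b\,p + c$ with rational $b/a$ cannot pin down an irrational supremum: if $b/a > 2+\alpha$ you only conclude $\sup n/p \le b/a$, which combined with your lower bound gives $2+\alpha \le \sup \le b/a$, not equality; if $b/a \le 2+\alpha$ the inequality must already fail on the high-exponent family you exhibit. Your ``equivalently'' clause---building an automaton for $n \ge (2+\alpha)p$ directly---is a genuinely different statement, not equivalent to any rational linear test, and while such an irrational comparison is in principle expressible in the Fibonacci numeration system, you would have to construct and justify that automaton; it is not a built-in Walnut primitive.

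The paper avoids this difficulty. It uses Walnut to list all periods $p$ admitting a cube (these turn out to have Zeckendorf shape $10010^*$), and then, for each such $p$, computes the \emph{maximal} $n$ for which a factor of length $n+p$ and period $p$ exists. The automaton accepting these extremal pairs $(n,p)$ has a language simple enough to read off by inspection, yielding closed forms $n = 2F_{k-1}-3$ and $p = 2F_{k-3}$. One then computes by hand
\[
\frac{n+p}{p} \;=\; 2 + \frac{2F_{k-2}-3}{2F_{k-3}} \;\longrightarrow\; 2+\alpha .
\]
This single computation delivers both bounds at once: the automaton certifies that these are \emph{all} the factors of exponent $\ge 3$ (upper bound), and the explicit limit shows the ratios approach $2+\alpha$ and never exceed it (lower bound). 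The step you are missing is precisely this ``read the automaton's language and take the limit by hand'' manoeuvre, which is the standard way irrational critical exponents are extracted in the Walnut framework.
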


\begin{proof}
We write a Walnut formula asserting that there exists an
antisquare of order $\geq 2$, as follows:
\begin{verbatim}
eval antisq "?msd_fib Ei,n (n>=2) & At (t<n) => GF[i+t]!=GF[i+n+t]":
\end{verbatim}
This returns {\tt FALSE}, so there are no antisquares of order $\geq 2$.

We now compute the periods that are associated with factors that have exponent $\geq 3$.
\begin{verbatim}
eval gfper "?msd_fib Ei (p>=1) & (Aj (j<=2*p) => GF[i+j]=GF[i+j+p])":
\end{verbatim}
The predicate above produces the automaton in Figure~\ref{gfper}, which shows that these periods are of the form
$10010^*$ in Fibonacci representation.

\begin{figure}[ht]
    \center{\includegraphics[width=0.7\textwidth]{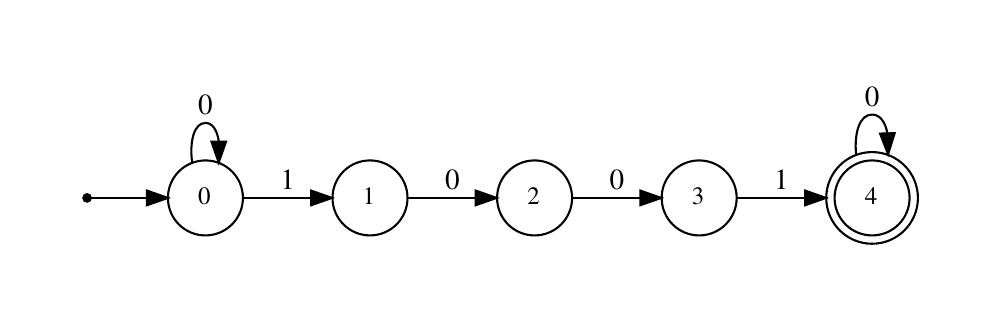}}
    \caption{Automaton for periods associated with $3^+$-powers in ${\bf w}$.}
    \label{gfper}
\end{figure}

Next, we compute the pairs $(n,p)$ such that ${\bf w}$ contains a factor of length $n+p$ with period $p$ of the form 
$10010^*$ and $n+p$ is the longest length of any factor with this period.
\begin{verbatim}
reg pows msd_fib "0*10010*";
def maximalreps "?msd_fib Ei
    (Aj (j<n) => GF[i+j] = GF[i+j+p]) & (GF[i+n] != GF[i+n+p])":
eval highestpow "?msd_fib (p>=1) & $pows(p) &
    $maximalreps(n,p) & (Am $maximalreps(m,p) => m <= n)":
\end{verbatim}

The automaton produced by the predicate {\tt highestpow} is given in Figure~\ref{highestpow}.
\begin{figure}[ht]
    \center{\includegraphics[width=\textwidth]{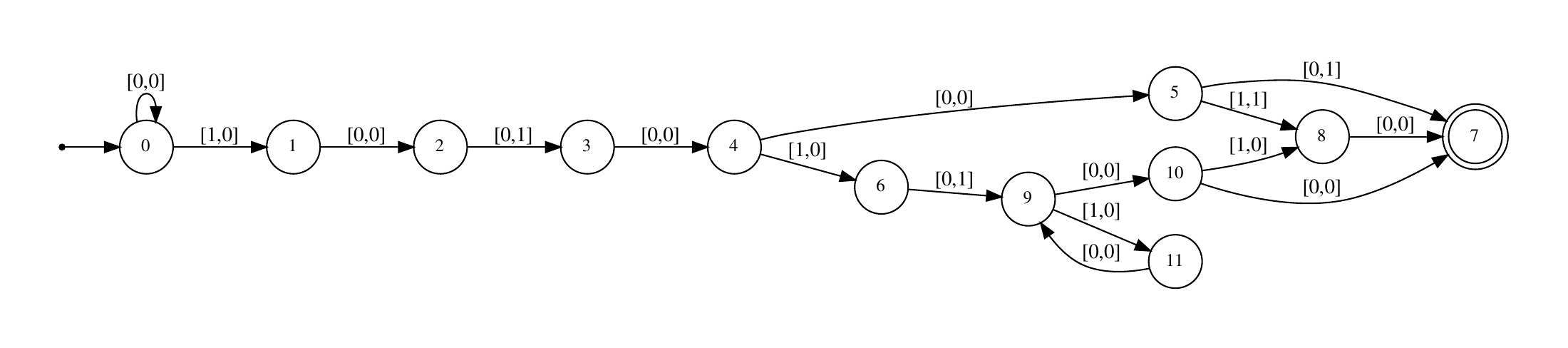}}
    \caption{Automaton for pairs $(n,p)$ associated with highest powers in ${\bf w}$.}
    \label{highestpow}
\end{figure}

The strings accepted by this automaton, omitting the leading
$[0,0]$, are as follows:
\begin{itemize}
\item $[1,0][0,0][0,1][0,0][0,0][0,1]$
\item $[1,0][0,0][0,1][0,0][0,0][1,1][0,0]$
\item $[1,0][0,0][0,1][0,0][1,0][0,1]([1,0][0,0])^k[0,0][1,0][0,0]$, $k \geq 0$
\item $[1,0][0,0][0,1][0,0][1,0][0,1]([1,0][0,0])^k[0,0][0,0]$, $k \geq 0$.
\end{itemize}
These correspond, respectively, to the values
\begin{itemize}
\item $(n,p) = (13,6) = (2F_6-3, 2F_4)$
\item $(n,p) = (23,10) = (2F_7 -3, 2F_5)$
\item $(n,p) = (F_{2k+10} + F_3 + \sum_{3\leq i\leq k+3} F_{2i}, F_{2k+8} + F_{2k+5}) = (2 F_{2k+9} -3, 2F_{2k+7})$ for $k \geq 0$
\item $(n,p) = (F_{2k+9} + \sum_{3\leq i\leq k+3} F_{2i-1}, F_{2k+7} + F_{2k+4})= (2F_{2k+8} - 3, 2F_{2k+6})$ for $k \geq 0$.
\end{itemize}
where we have used the well-known Fibonacci identities
\begin{align*}
F_2 + F_4 + F_6 + \cdots + F_{2t} &=  F_{2t+1} - 1 \\
F_1 + F_3 + F_5 + \cdots + F_{2t-1} &= F_{2t}.
\end{align*}
Now the exponent of these finite factors is $(n+p)/p$, which is
$$\frac{2 F_j + 2F_{j-2} - 3}{2 F_{j-2}} $$
for $j \geq 6$.   These quotients tend to $2 + \alpha$ from below,
and hence the critical exponent is $2+ \alpha$.
\end{proof}

\section{Optimality of the previous construction}
\label{three}

In this section we show that the critical exponent of the word constructed in Section~\ref{two} is
best possible; i.e., that every infinite good word has critical exponent
at least $2+\alpha$.  It is somewhat easier to work with bi-infinite words,
so we begin with results concerning bi-infinite words and then explain at the end of the
section how to obtain the desired result for right-infinite words.

If $S$ is a set of nonempty finite words, then by ${{}^\omega}S^\omega$ 
we mean the set of bi-infinite
words made up of concatenations of
the elements of $S$.

\begin{theorem}\label{thmfib}
Every bi-infinite binary word avoiding $4$-powers and $\{11,000,10101\}$
has the same set of factors as $\bf f$.
\end{theorem}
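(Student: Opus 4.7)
The plan is to show that $w$ lies in the minimal Fibonacci subshift $\mathcal{X}_\sigma$---the shift-orbit closure of ${\bf f}$, whose factor set is $\mathrm{Fac}({\bf f})$---after which minimality will yield $\mathrm{Fac}(w) = \mathrm{Fac}({\bf f})$. I start by desubstituting: since $w$ avoids $11$ and $000$, consecutive $1$'s in $w$ are separated by exactly one or two $0$'s, giving a unique decomposition of $w$ into blocks from $\{01, 001\}$. Writing $\psi \colon a \mapsto 01,\ b \mapsto 001$, we have $w = \psi(w_1)$ for a unique $w_1 \in \{a,b\}^\Zee$. Translating the remaining avoidance conditions: avoiding $10101$ forces $w_1$ to avoid $aa$, avoiding $(001)^4$ forces $w_1$ to avoid $bbbb$, and avoiding $(010)^4$ forces $w_1$ to avoid $abbb$. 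Combined, these imply each $a$ is isolated and each $b$-run has length $1$ or $2$, so $w_1 \in \{ab, abb\}^\Zee$; writing $\tau \colon c \mapsto ab,\ d \mapsto abb$ gives $w_1 = \tau(w_2)$ for a unique $w_2 \in \{c,d\}^\Zee$.

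A direct computation identifies $\psi\tau$ with a power of the Fibonacci morphism $\sigma \colon 0 \mapsto 01,\ 1 \mapsto 0$: namely $\psi\tau(c) = 01001 = \sigma^3(0)$ and $\psi\tau(d) = 01001001 = \sigma^3(10)$. So, setting $\eta \colon c \mapsto 0,\ d \mapsto 10$ and $w' = \eta(w_2) \in \{0,1\}^\Zee$, we obtain $w = \sigma^3(w')$. A useful supporting observation is that the image of any bi-infinite binary word under $\sigma^3$ automatically avoids $11$, $000$, and $10101$, as one verifies by inspecting the possible concatenations of $\sigma^3(0) = 01001$ and $\sigma^3(1) = 010$. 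I would then iterate the desubstitution on $w'$ to obtain $w \in \sigma^{3n}(\{0,1\}^\Zee)$ for every $n$, from which the primitivity and Pisot property of $\sigma$ force $w \in \mathcal{X}_\sigma$, completing the argument via minimality.

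The principal technical obstacle lies in closing the iteration. Although $w'$ inherits $11$-avoidance automatically from the structure of $\eta$, it need not inherit $000$- or $10101$-avoidance a priori---for example, $w_2$ may contain $ccc$, whose image $(01001)^3$ has exponent $3$ and is a legitimate factor of ${\bf f}$, while producing a $000$ in $w'$. The iteration therefore requires a finite case analysis showing that every $4$-power in $w$ either corresponds to a $4$-power in $w'$ or to a specific short pattern in $w'$ (such as $1100$, which yields $(010)^4$ under $\sigma^3$), and that such patterns are excluded by the $4$-power hypothesis on $w$; alternatively, one argues directly that $w'$ must itself lie in $\sigma(\{0,1\}^\Zee)$, thereby embedding $w$ in all iterates of $\sigma^3$ and hence in $\mathcal{X}_\sigma$.
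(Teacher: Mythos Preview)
Your proposal has a genuine gap that you yourself identify but do not close: you cannot iterate the $\sigma^3$-desubstitution because the hypotheses (avoiding $4$-powers and $\{11,000,10101\}$) do not transparently pass to $w'$. Your example $w_2 \ni ccc$ is exactly the obstruction: $\eta(ccc)=000$ sits in $w'$ while $\sigma^3(000)=(01001)^3$ is only a cube in $w$, so $4$-power-freeness of $w$ gives you nothing. Without this step the argument does not terminate, and the vague alternatives you sketch (``a finite case analysis'' or ``argue directly that $w'\in\sigma(\{0,1\}^{\mathbb Z})$'') are precisely the missing content.

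The paper's proof avoids this difficulty by desubstituting by $\sigma$ itself rather than by $\sigma^3$. Since $w$ avoids $11$, one writes $w=h(v)$ with $h=\sigma:0\mapsto 01,\ 1\mapsto 0$, and then checks that $v$ again avoids $4$-powers and $\{11,000,10101\}$. The point is that these three forbidden factors cycle into one another under $h$-preimage: $11$ in $v$ forces $h(110)=0001$ (hence $000$) in $w$; $000$ in $v$ forces $h(000)=010101$ (hence $10101$) in $w$; and $10101$ in $v$, extended to $0101010$ since $v$ avoids $11$, forces $h(0101010)0=(010)^4$ in $w$. Thus the full hypothesis is inherited by $v$, the induction closes in one line, and every factor of $w$ lies in $\sigma^n(\{0,1\}^{\mathbb Z})$ for all $n$, hence in $\mathrm{Fac}({\bf f})$. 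Your two-stage decomposition through $\psi$ and $\tau$ obscures this cyclic structure; collapsing it to a single $\sigma$-step is what makes the argument go through.
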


\begin{proof}
First, we check that $\bf f$ avoids $4$-powers and $\{11,000,10101\}$.

Now consider a bi-infinite binary word $\bf w$ avoiding $4$-powers and $\{11,000,10101\}$.
Since $\bf w$ avoids $11$, we have ${\bf w}\in{}^\omega\{01,0\}^\omega$.
Thus there exists a bi-infinite word $\bf v$ such that ${\bf w}=h(\bf v)$, where $h$ is the morphism $0 \rightarrow 01$, $1 \rightarrow 0$.
Now it suffices to show that the pre-image $\bf v$ also avoids $4$-powers and $\{11,000,10101\}$.
Clearly, $\bf v$ avoids $4$-powers, since otherwise ${\bf w}=h(\bf v)$ would contain a 4-power.
Now we show by contradiction that $\bf v$ avoids every factor in $\{11,000,10101\}$.
\begin{itemize}
     \item If $\bf v$ contains $11$, then $\bf v$ contains $110$.
     So $\bf w$ contains $h(110)=0001$---a contradiction, since $\bf w$ avoids $000$.
     \item If $\bf v$ contains $000$ then $\bf w$ contains $h(000)=010101$---a contradiction, since $\bf w$ avoids $10101$.
     \item If $\bf v$ contains $10101$ then $\bf v$ contains $0101010$, since $\bf v$ avoids $11$.
     So $\bf w$ contains $h(0101010)=01001001001$.
     Since $\bf w$ avoids $11$, we see that $\bf w$ contains $010010010010=(010)^4$---a contradiction, since $\bf w$ avoids 4-powers.
\end{itemize}
\end{proof}

\begin{lemma}\label{lemgfib}
 Every bi-infinite binary word avoiding $4$-powers and
 \begin{align*}
     F &=\{0011,0110,1100,1001,010101,101010,1000101110, \\
     & 0111010001,101110111011101,010001000100010\}
 \end{align*}
 has the same set of factors as $g(\bf f)$ or $\overline{g(\bf f)}$.
\end{lemma}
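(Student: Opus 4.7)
I follow the template of Theorem~\ref{thmfib}: show that $\bf w$ or $\overline{\bf w}$ can be written as $g(\bf v)$ for some bi-infinite $\bf v$ avoiding $\{11, 000, 10101\}$ and $4$-powers, and then invoke Theorem~\ref{thmfib}. For the forward direction (that $g(\bf f)$ and $\overline{g(\bf f)}$ avoid $F$ and $4$-powers), one uses that in $g(\bf f)$ every $0$-run has length $1$ and every $1$-run has length $1$ or $3$, with the short/long pattern on $1$-runs tracking the $00$/$010$ structure of $\bf f$; the $\{11, 000, 10101\}$-avoidance of $\bf f$ then rules out every factor of $F$.

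\textbf{Main step.} Let $\bf w$ avoid $F$ and $4$-powers. First, every maximal run of $\bf w$ has length $1$ or $3$: a run of length $\geq 4$ is a $4$-power, and an isolated length-$2$ run is flanked by the opposite symbol, producing $0110$ or $1001 \in F$. I then show that exactly one of $00, 11$ appears in $\bf w$. Suppose both $000$ and $111$ appeared; in the run-length encoding of $\bf w$, consider a nearest opposite-type pair of long runs. Their encoding distance $d$ must be odd (opposite parities), at least $3$ (else two adjacent long runs form $000111$ or $111000$, containing $0011, 1100 \in F$), and at most $5$ (else six consecutive short runs give $010101$ or $101010 \in F$), so $d \in \{3,5\}$. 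When $d = 5$, the four interior shorts (of length $1$, alternating in type) give the factor $000 \cdot 1010 \cdot 111 = 0001010111$, containing $010101 \in F$; the reverse ordering gives $1110101000$, containing $101010 \in F$. When $d = 3$, the two interior shorts together with the forced flanking shorts produce the length-$10$ factor $1 \cdot 000 \cdot 1 \cdot 0 \cdot 111 \cdot 0 = 1000101110$; extending by one symbol to the right---necessarily a $1$, regardless of whether the next run is short or long---gives $10001011101$, containing $0001011101 \in F$; the reverse ordering yields, upon a one-symbol left-extension, $10111010001$ containing $1011101000 \in F$. Finally, if neither $00$ nor $11$ appeared in $\bf w$, then all runs would be singletons and $\bf w$ would contain $010101 \in F$.

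\textbf{Closing.} Assume WLOG $\bf w$ avoids $00$ (the case $\bf w$ avoids $11$ is handled identically using $\bar g: 0 \mapsto 10,\ 1 \mapsto 00$ in place of $g$, with the complementary $F$-factors---each of $101010$, $010001000100010$, and so on, is itself in $F$---and yields $\bf w$ having the factor set of $\overline{g(\bf f)}$). The avoidance of $00$, $0110 \in F$, and of $4$-powers forces two $0$s of $\bf w$ to be separated by an even number of positions (only gaps $2$ and $4$ are consistent), so all $0$s share a common parity and $\bf w \in {}^{\omega}\{01, 11\}^{\omega}$; write $\bf w = g(\bf v)$. I verify $\bf v$ avoids $\{11, 000, 10101\}$ and $4$-powers: $4$-power freeness pulls back; $g(11) = 1111$ is a $4$-power; $g(000) = 010101 \in F$; and if $\bf v$ contained $10101$, then $\bf v$'s $11$-avoidance would extend it to $0101010$, so $\bf w$ would contain $g(0101010) = 01110111011101$, and prepending the final symbol of the preceding $g$-block (always $1$, since $g(0) = 01$ and $g(1) = 11$) would give $101110111011101 \in F$, a contradiction. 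Theorem~\ref{thmfib} then gives $\bf v$ the factor set of $\bf f$, whence $\bf w = g(\bf v)$ has the factor set of $g(\bf f)$. The principal obstacle is the ``at most one of $00, 11$'' step, which relies on the encoding-distance bound $d \leq 5$ (from the length-$6$ factors in $F$) and on $F$'s containing the length-$10$ factor $0001011101$ together with its reverse $1011101000$, in order to cover both orderings of the nearest opposite-type pair.
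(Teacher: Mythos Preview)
Your proof is correct and takes a genuinely different route from the paper's. The paper proceeds by a finite computer search: it computes the set $S$ of length-$100$ factors that extend on both sides to length-$300$ words avoiding $4$-powers and $F$, observes that $S$ coincides with the length-$100$ factors of $g({\bf f})$ and $\overline{g({\bf f})}$, and only then (as you do) decodes via $g$ and appeals to Theorem~\ref{thmfib}. Your argument replaces this machine verification by a direct run-length analysis: first all runs have length $1$ or $3$ (via $0110,1001\in F$ and $4$-power freeness), then at most one letter admits long runs, which you establish by bounding the encoding distance $d$ between a nearest opposite-type pair of long runs and eliminating $d\in\{1,3,5\}$ using, respectively, $\{0011,1100\}$, $\{0001011101,1011101000\}$, and $\{010101,101010\}$. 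This is more elementary and makes visible exactly which element of $F$ obstructs each configuration, at the cost of a longer case analysis; the paper's approach is shorter to write but opaque.

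Two small remarks. In the $d=3$ case, your phrase ``regardless of whether the next run is short or long'' is only justified because the flanking run at position $i+4$ is itself forced to be short by the minimality of the pair (otherwise it would form a closer opposite-type pair with the long run at $i+3$); you allude to this with ``forced flanking shorts'', but it would be clearer to say so explicitly before extending. Second, the ``forward direction'' you sketch (that $g({\bf f})$ itself avoids $F$ and $4$-powers) is not needed for the lemma as stated: once you have shown that any such ${\bf w}$ has the factor set of $g({\bf f})$, it follows automatically that $g({\bf f})$ avoids $F$ and $4$-powers whenever such a ${\bf w}$ exists.
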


\begin{proof}
Notice that $F$ is closed under bitwise complement and reversal.
Let $\bf w$ be a bi-infinite binary word avoiding $4$-powers and $F$.
Suppose that $\bf w$ contains $001011$.
\begin{itemize}
    \item Since $1001\in F$ and $0110\in F$, the word $\bf w$ contains $00010111$.
    \item Since $0000$ and $1111$ are $4$-powers, the word $\bf w$ contains $1000101110$.
\end{itemize}
This is a contradiction since $1000101110\in F$.
So $\bf w$ avoids $001011$. By considering the complement, the word $\bf w$ also avoids $110100$.
Now suppose that $\bf w$ contains $110111011$.
\begin{itemize}
    \item Since $0110\in F$, the word $\bf w$ contains $11101110111$.
    \item Since $1111$ is a $4$-power, the word $\bf w$ contains $0111011101110$.
    \item Since $0011\in F$ and $1100\in F$, the word $\bf w$ contains $101110111011101$.
\end{itemize}
This is a contradiction since $101110111011101\in F$.
So $\bf w$ avoids $110111011$. By considering the complement, the word $\bf w$ also avoids $001000100$.
Thus, $\bf w$ avoids $4$-powers and
$$F'=\{0011,0110,1100,1001,010101,101010,001011,110100,110111011,001000100\}.$$
Notice that $F'$ is closed under complement and reversal.
By symmetry, we now suppose that $\bf w$ contains $11$.
Notice that $111$, $11011$, and $1101011$ are the only possible factors of $\bf w$
that start with $11$, end with two identical letters,
and contain two identical letters only as a prefix and a suffix.
In particular, the word $\bf w$ avoids $00$.
Moreover, the blocks of consecutive $1$'s have length 1 or 3.
So ${\bf w}\in{}^\omega\{01,11\}^\omega$.
Thus ${\bf w}=g(\bf v)$ for some bi-infinite binary word $\bf v$.

Since $\bf w$ avoids $4$-powers, the word $\bf v$ also avoids 4-powers.
Moreover,
\begin{itemize}
    \item $g(11)=1111$ is a $4$-power,
    \item $g(000)=010101$ belongs to $F$,
    \item $g(10101)=1101110111$ contains $110111011\in F'$.
\end{itemize}
So $\bf v$ also avoids $\{11,000,10101\}$.
By Theorem~\ref{thmfib}, the word $\bf v$ has the same set of factors as $\bf f$.
That is, the word $\bf w$ has the same set of factors as $g(\bf f)$.
\end{proof}

\begin{theorem}\label{thm_lower_bd_ce}
Every good bi-infinite binary word has critical exponent at least $2+\alpha$.
\end{theorem}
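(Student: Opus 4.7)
The plan is to argue by contradiction, reducing the statement to Lemma~\ref{lemgfib} combined with the critical-exponent computation from Section~\ref{two}. I would suppose that $\mathbf{w}$ is a bi-infinite good binary word with $\cexp(\mathbf{w}) < 2+\alpha$ and aim to derive a contradiction. First I would verify that $\mathbf{w}$ satisfies the hypotheses of Lemma~\ref{lemgfib} (it avoids $4$-powers and every element of $F$); then invoke the lemma to pin down the factor set of $\mathbf{w}$; and finally contradict the bound on $\cexp(\mathbf{w})$ by appealing to the Walnut-based computation of Section~\ref{two}.

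For the verification step, note that $2+\alpha = (5+\sqrt{5})/2 < 4$, so $\mathbf{w}$ is automatically $4$-power-free. Eight of the ten elements of $F$ are antisquares of order at least $2$: the four words of length $4$ (for instance $0011 = 00 \cdot 11$) have order $2$, the two words of length $6$ have order $3$, and the two words of length $10$ (for instance $0001011101 = 00010 \cdot 11101$) have order $5$; by the goodness assumption, $\mathbf{w}$ avoids all eight. The remaining two elements, $101110111011101 = (1011)^{15/4}$ and $010001000100010 = (0100)^{15/4}$, have exponent $15/4 > (5+\sqrt{5})/2 = 2+\alpha$, and are therefore ruled out by the assumption on $\cexp(\mathbf{w})$.

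Applying Lemma~\ref{lemgfib}, I conclude that $\mathbf{w}$ has the same factor set as $g(\mathbf{f})$ or $\overline{g(\mathbf{f})}$. However, the analysis of Section~\ref{two} shows that $g(\varphi^\omega(0)) = 01 \cdot g(\mathbf{f})$ has critical exponent exactly $2+\alpha$, attained as the limit of the factor exponents $(2F_{k-1}+2F_{k-3}-3)/(2F_{k-3}) \to 2+\alpha$; the analogous conclusion holds for $\overline{g(\mathbf{f})}$ by the $0 \leftrightarrow 1$ symmetry. Since the critical exponent depends only on the factor set, this forces $\cexp(\mathbf{w}) = 2+\alpha$, contradicting the hypothesis. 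The main obstacle is not in this final assembly but in the preceding work: Lemma~\ref{lemgfib} packages a substantial finite combinatorial search, and Section~\ref{two} relies on the Walnut computation of the critical exponent of $g(\mathbf{f})$. The only genuinely new work at this stage is the routine dichotomy for the ten elements of $F$ into antisquares of order $\geq 2$ and repetitions of exponent exceeding $2+\alpha$.
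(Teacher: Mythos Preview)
Your proposal is correct and follows essentially the same approach as the paper: assume $\cexp(\mathbf{w})<2+\alpha$, check that the eight antisquare elements of $F$ are excluded by goodness while the two $(15/4)$-powers are excluded by the exponent bound (and $4$-power-freeness is automatic since $2+\alpha<4$), invoke Lemma~\ref{lemgfib}, and then use the Section~\ref{two} computation to get the contradiction $\cexp(\mathbf{w})=2+\alpha$. Your write-up is in fact slightly more explicit than the paper's in spelling out the orders of the antisquare elements of $F$ and in noting that $g(\varphi^\omega(0))=01\cdot g(\mathbf{f})$, so that the critical-exponent result transfers to $g(\mathbf{f})$ itself.
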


\begin{proof}
Suppose that $\bf w$ is a good bi-infinite binary word; that is,
it contain no antisquares except possibly $01$ and $10$.   Also
assume $\bf w$ has critical exponent smaller than $2+\alpha$.
Consider the set $F$ from Lemma~\ref{lemgfib} 
and notice that $F\setminus\{101110111011101,010001000100010\}$ contains only antisquares.
So $\bf w$ avoids $4$-powers and $F\setminus\{101110111011101,010001000100010\}$.
Moreover, $\bf w$ avoids $101110111011101=(1011)^{15/4}$ since $15/4 > 2+\alpha$.
By symmetry, $\bf w$ also avoids $010001000100010$.
So $\bf w$ avoids $4$-powers and $F$.

By Lemma~\ref{lemgfib}, $\bf w$ has the same set of factors as either $g(\bf f)$ or $\overline{g(\bf f)}$.
So ${\bf w}$ has critical exponent $2+\alpha$.
\end{proof}

\begin{corollary}
Every (right-) infinite good binary word 
has critical exponent at least $2+\alpha$.
\end{corollary}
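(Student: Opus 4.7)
The plan is to bootstrap from the bi-infinite case already handled by Theorem~\ref{thm_lower_bd_ce}. Given a right-infinite binary word ${\bf w}$ whose only antisquares are among $\{01,10\}$, the strategy is to extract a bi-infinite word ${\bf u}$ every finite factor of which is a factor of ${\bf w}$, apply the bi-infinite theorem to ${\bf u}$, and then transfer the lower bound back to ${\bf w}$ via the factor inclusion.

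To produce ${\bf u}$, I would run a standard compactness / K\"onig's-lemma argument. For each $n\geq 1$, pick a factor $w_n$ of ${\bf w}$ of length $2n+1$ and index its letters as $w_n[-n]\,w_n[-n+1]\cdots w_n[0]\cdots w_n[n]$. Since the alphabet $\{0,1\}$ is finite, a diagonal extraction yields a subsequence along which, for every fixed integer $i$, the value $w_n[i]$ is eventually constant; take this limit to be ${\bf u}[i]$. Then ${\bf u}\in\{0,1\}^{\mathbb{Z}}$ is bi-infinite, and any window ${\bf u}[-k]\cdots{\bf u}[k]$ agrees with $w_n[-k]\cdots w_n[k]$ for all sufficiently large $n$ in the chosen subsequence, so every finite factor of ${\bf u}$ is a factor of ${\bf w}$.

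Next I would observe that antisquares are finite factors, so the inclusion of factor sets transfers the antisquare restriction: every antisquare of ${\bf u}$ is an antisquare of ${\bf w}$, hence lies in $\{01,10\}$. Theorem~\ref{thm_lower_bd_ce} then yields $\cexp({\bf u})\geq 2+\alpha$. Moreover, each nonempty factor of ${\bf u}$ is a nonempty factor of ${\bf w}$, so the supremum of exponents defining $\cexp({\bf u})$ is bounded above by that defining $\cexp({\bf w})$. Chaining the two inequalities gives $\cexp({\bf w})\geq\cexp({\bf u})\geq 2+\alpha$, as desired.

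I do not anticipate any real obstacle in this reduction: it is purely a topological compactness argument, using only the fact that the binary alphabet is finite and that the properties in question (avoiding certain finite factors, having antisquares contained in $\{01,10\}$) are closed under passing to a sub-language of factors. All of the structural combinatorial work is already packaged inside Theorem~\ref{thm_lower_bd_ce}.
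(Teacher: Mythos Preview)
Your proposal is correct and follows essentially the same approach as the paper: both pass from the right-infinite word to a bi-infinite word via a compactness/K\"onig-type argument, then invoke Theorem~\ref{thm_lower_bd_ce} and transfer the bound back through the factor inclusion. The only cosmetic difference is that the paper phrases the extraction in terms of recurrent factors (which guarantee two-sided extendability), whereas you run a direct diagonalization on arbitrary length-$(2n+1)$ factors; either version works.
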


\begin{proof}
Let ${\bf w}$ be an infinite good binary word,
and let $\mbox{RecFac}({\bf w})$
denote the set of its recurrent factors.  That is, the set $\mbox{RecFac}({\bf w})$ consists of the factors
of ${\bf w}$ that appear infinitely often in ${\bf w}$.  Then for any $y \in \mbox{RecFac}({\bf w})$, we see that
$y$ has arbitrarily large two-sided extensions in $\mbox{RecFac}({\bf w})$.  By a `two-sided' analogue of K\"onig's infinity lemma,
there exists a bi-infinite word ${\bf w}'$ such that every factor of ${\bf w}'$ is an element of $\mbox{RecFac}({\bf w})$.
By Theorem~\ref{thm_lower_bd_ce}, the bi-infinite word ${\bf w}'$ has critical exponent at least $2+\alpha$,
and thus so does the infinite word ${\bf w}$.
\end{proof}

\section{The class \texorpdfstring{$\AO_\ell$}{AO}}
\label{AO_l}

Instead of avoiding all antisquares of order greater than one, we could consider avoiding arbitrarily large antisquares.

\begin{proposition}\label{7/3}
Every infinite binary word avoiding
$\tfrac{7}{3}$-powers contains arbitrarily large antisquares.
\end{proposition}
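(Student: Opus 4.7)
My plan is to show that every infinite $7/3$-power-free binary word $\mathbf{w}$ admits iterated desubstitution by the Thue-Morse morphism $\mu: 0 \mapsto 01,\ 1 \mapsto 10$, and then to exploit the fact that $\mu$ doubles the order of any antisquare.

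The main structural input is a desubstitution lemma: every infinite $7/3$-power-free binary word $\mathbf{w}$ can be written $\mathbf{w} = u \cdot \mu(\mathbf{v})$ with $|u| \le 1$ and $\mathbf{v}$ itself an infinite $7/3$-power-free binary word. The justification proceeds as follows: first, $\mathbf{w}$ avoids $000$ and $111$ (both are $3$-powers), so every run of $\mathbf{w}$ has length $1$ or $2$; second, one shows that all occurrences of $00$ and $11$ in $\mathbf{w}$ lie at positions of a single parity, so that after dropping at most one initial letter, $\mathbf{w}$ parses into blocks from $\{01, 10\} = \{\mu(0), \mu(1)\}$; third, the preimage $\mathbf{v}$ inherits $7/3$-power-freeness, since $\mu$ doubles both the lengths and the periods of factors.

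Iterating the lemma $k$ times gives $\mathbf{w} = u_0 \cdot \mu(u_1) \cdot \mu^2(u_2) \cdots \mu^{k-1}(u_{k-1}) \cdot \mu^k(\mathbf{v}_k)$, with each $\mathbf{v}_k$ infinite and $7/3$-power-free. Since $\mathbf{v}_k$ equals neither $0^\omega$ nor $1^\omega$, it contains at least one of the factors $01$ or $10$. The Thue-Morse morphism commutes with complementation, $\mu(\overline{x}) = \overline{\mu(x)}$, so $\mu^k(01) = \mu^k(0) \cdot \overline{\mu^k(0)}$ and $\mu^k(10) = \mu^k(1) \cdot \overline{\mu^k(1)}$ are both antisquares of order $2^k$, and one of them occurs as a factor of $\mu^k(\mathbf{v}_k)$, hence of $\mathbf{w}$. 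As $k$ was arbitrary, $\mathbf{w}$ contains antisquares of arbitrarily large order.

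The principal obstacle is the desubstitution lemma, specifically ruling out infinite words in which the repetitions $00$ and $11$ appear at positions of both parities (which would obstruct both the aligned and the shifted parsings into $\mu$-blocks). I expect this to follow from a short case analysis on two nearby opposite-parity repetitions, which will force either a $3$-power or a factor of exponent at least $7/3$ within a bounded window and contradict the hypothesis; alternatively, a classical desubstitution result of Restivo-Salemi or Karhum\"aki-Shallit type may be invoked as a black box.
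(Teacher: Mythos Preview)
Your proposal is correct and follows essentially the same route as the paper: iterate Thue--Morse desubstitution to exhibit $\mu^k$-images inside $\mathbf{w}$, then use that $\mu^k(a)$ (equivalently $\mu^k(01)$, $\mu^k(10)$) is an antisquare of order $2^{k-1}$ (resp.\ $2^k$). The only difference is packaging: the paper invokes the Karhum\"aki--Shallit structure theorem directly (with $x_i\in\{\epsilon,0,1,00,11\}$), whereas you sketch a self-contained parity argument for the desubstitution step and note the black-box citation as an alternative; either way the bound on the discarded prefix is irrelevant to the conclusion.
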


\begin{proof}
By a result of \cite{Karhumaki&Shallit:2004}, we know
that every infinite binary word avoiding
$\tfrac{7}{3}$-powers can be written in the
form $x_1 \mu(x_2 \mu(x_3 \mu(\cdots)) \cdots)$, where $x_i \in \{\epsilon, 0, 1, 00, 11 \}$ and $\mu$ is the Thue-Morse morphism, defined by $\mu:0 \rightarrow 01$, $1 \rightarrow 10$.  It follows that every such word must contain arbitrarily large factors of the form $\mu^n(0)$.  But every word $\mu^n(0)$ for $n \geq 1$
is an antisquare.
\end{proof}

On the other hand, we can prove the following result on the class $\AO_\ell$ of binary words
avoiding antisquares of order $\geq \ell$:

\begin{theorem}\label{thm_AO}
There exists an infinite $\beta^+$-free binary word containing no antisquare of order $\geq \ell$ for the following pairs $(\ell,\beta)$:
\begin{itemize}
    \item[(a)] $(2,2+\alpha)$
    \item[(b)] $(3,8/3)$
    \item[(c)] $(5,5/2)$
    \item[(d)] $(6,7/3)$
\end{itemize}
These are all optimal.
\end{theorem}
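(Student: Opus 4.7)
Part (a) already follows from Sections~\ref{two} and~\ref{three}: the word ${\bf w}=g(\varphi^\omega(0))$ has critical exponent $2+\alpha$ and contains no antisquare of order $\geq 2$, and by Theorem~\ref{thm_lower_bd_ce} (and its corollary) this exponent is optimal for $\ell=2$.

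For parts (b), (c), (d), the plan for the upper bounds is, in each case, to produce an explicit morphism $h$ (possibly followed by a coding to $\{0,1\}$) whose infinite fixed point is a binary word that simultaneously realizes the claimed $\beta^+$ critical exponent and avoids every antisquare of order $\geq \ell$. Such morphisms are found by a directed computer search over alphabets of size $3$ or $4$ with short image lengths. Once a candidate is in hand, correctness amounts to two verifications: (i) the word is $\beta^+$-free, and (ii) no antisquare of order $\geq \ell$ occurs as a factor. When the fixed point is automatic in a suitable Pisot numeration system, both checks can be discharged by \texttt{Walnut} predicates analogous to those used in Section~\ref{two}. Otherwise one argues directly: show that the morphism is primitive and synchronizing, so that every sufficiently long factor is the $h$-image of a unique shorter factor, and then observe that a minimal would-be forbidden factor (either a $(\beta^+)^+$-power or an antisquare of order $\geq\ell$) would pull back to a shorter forbidden factor, reducing everything to a finite check.

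Optimality in part (d) is immediate from Proposition~\ref{7/3}: any binary word that is $\beta'^+$-free for some $\beta'<\tfrac{7}{3}$ is in particular $\tfrac{7}{3}$-free and therefore contains arbitrarily large antisquares; in particular, it contains an antisquare of order $\geq 6$, contradicting the required avoidance. For parts (b) and (c), since the thresholds $\tfrac{8}{3}$ and $\tfrac{5}{2}$ are rational, I would establish optimality by a finite depth-first backtracking search: enumerate all binary words that are $\beta$-free (with $\beta$ strictly less than the claimed bound) and that avoid every antisquare of order $\geq \ell$; show that this tree is finite — i.e., bounded in depth — and then invoke K\"onig's lemma to conclude that no infinite such word exists.

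The main obstacle is finding the morphisms in (b), (c), (d); these tend to be ad hoc and the search space grows quickly with the alphabet size and image length. A secondary difficulty is ensuring the backtracking arguments for optimality in (b) and (c) are provably complete: if the naive search does not terminate within computationally feasible depth, one would instead need a structural characterization in the spirit of Theorem~\ref{thmfib} and Lemma~\ref{lemgfib}, showing that every eligible bi-infinite word must factor through a specific substitution, and then reading off the critical exponent from the fixed point of that substitution.
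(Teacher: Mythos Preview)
Your proposal is a reasonable outline but not a proof: the essential content for (b), (c), (d)---namely, the explicit morphisms---is missing, and you explicitly flag this as the main obstacle.  The paper does exactly what you describe in spirit, but it supplies the data: it gives concrete uniform morphisms $\xi_3,\xi_5,\xi_6$ from $\{0,1,2\}$ to $\{0,1\}^*$ (of sizes $36$, $19$, $37$) and applies them to an arbitrary ternary squarefree word rather than taking a fixed point.  Verification of $\beta^+$-freeness uses a standard synchronizing-morphism lemma (\cite[Lemma~23]{Mol&Rampersad&Shallit:2020}) with an explicit bound $t$ on the length of squarefree preimages to check; antisquare avoidance is handled by first bounding the length $m$ of any factor that occurs together with its complement, then exhaustively checking factors up to length $2m$.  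Optimality for (b)--(d) is by depth-first search, and the paper reports the exact lengths of the longest surviving words (including the case $\ell=4$, which explains why $4$ is skipped in the list).  Your fallback plan (structural characterization \`a la Theorem~\ref{thmfib}) is unnecessary here: the searches terminate quickly.

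Two smaller remarks.  First, your use of Proposition~\ref{7/3} for the optimality of (d) is a nice shortcut that the paper does not take, but your phrasing has a gap: you argue only for words that are $\beta'^+$-free for some $\beta'<\tfrac73$, whereas what is needed is that no infinite $\tfrac73$-free word lies in $\AO_6$.  A $\tfrac73$-free word can have critical exponent exactly $\tfrac73$ (unattained) without being $\beta'^+$-free for any $\beta'<\tfrac73$.  Simply apply Proposition~\ref{7/3} directly: any infinite word avoiding $\tfrac73$-powers already contains arbitrarily large antisquares, so it is not in $\AO_6$.  Second, your suggestion of fixed points followed by a coding works in principle, but the paper's ``image of a squarefree ternary word'' framework is more convenient here because the verification lemma it cites is tailored to that setting and gives explicit finite bounds.
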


\begin{proof}
Item (a) was already proved in Section~\ref{two}.  For each of the remaining pairs $(\ell,\beta)$, we apply a morphism $\xi_\ell$ to any ternary squarefree infinite word ${\bf w}$ and check that it has the desired properties.
The morphisms are given in Table~\ref{tabOrderMorphisms}.  The
columns are $\ell$, where the word contains no antisquares of order $\geq \ell$; $\beta$, where the word avoids $\beta^+$ powers;
$s$, the size of the uniform morphism; and the morphism.

\begin{table}[H]
\begin{center}
\begin{tabular}{|c|c|c|c|l|}
\hline
$\ell$ & $\beta$ & $s$ & morphism name & morphism\\
\hline
3 & 8/3 & 36 & $\xi_3$ &  $0 \rightarrow 001001010011001010010011001001010011$  \\
&&&&    $1 \rightarrow 001001010010011001010011001010010011$  \\
&&&&    $2 \rightarrow 001001010010011001001010011001010011$\\
\hline
5 & 5/2 & 19 & $\xi_5$ & $0 \rightarrow 0010110100101101011$\\
&&&& $1  \rightarrow 0010110100101100101$\\
&&&& $2  \rightarrow 0010110011001010011$ \\
\hline
6 & 7/3 & 37 & $\xi_6$ & $0 \rightarrow 0010011010010110100110110011010011011$\\
&&&& $1 \rightarrow 0010011010010110100110110010011010011$\\
&&&& $2 \rightarrow 0010011010010110100110110010011001011$\\
\hline
\end{tabular}
\end{center}
\caption{Morphisms generating words in $\AO_\ell$.}
\label{tabOrderMorphisms}
\end{table} 

To verify the $\beta^+$-freeness of $\xi_\ell({\bf w})$
we use~\cite[Lemma~23]{Mol&Rampersad&Shallit:2020}.  In order to do so,
we check that $\xi_\ell$ is synchronizing and that $\xi_\ell(u)$ is $\beta^+$-free for every squarefree
ternary word $u$ of length $t$, where $t$ is specified by~\cite[Lemma~23]{Mol&Rampersad&Shallit:2020}.
In order to show that $\xi_\ell(\bf{w})$ contains no antisquares of order $\geq \ell$, we find a length
$m$ such that if $v$ and its complement are both factors of $\xi_\ell(\bf{w})$, then $|v|\leq m$.
We can then check that $\xi_\ell(\bf{w})$ contains no antisquares of order $\geq \ell$ by exhaustively checking all factors of length at most $2m$.  The parameters $t$ and $m$
for each $\xi_\ell$ are given in Table~\ref{tabOrderChecks}.
\begin{table}[H]
\begin{center}
\begin{tabular}{|c|c|c|c|c|}
\hline
morphism & $\ell$ & $\beta$ & $t$ & $m$ \\
\hline
$\xi_3$ & 3 & 8/3 & 8 & 6\\
$\xi_5$ & 5 & 5/2 & 10 & 16\\
$\xi_6$ & 6 & 7/3 & 14 & 26\\
\hline
\end{tabular}
\end{center}
\caption{Parameters for checking correctness of $\xi_\ell$.}
\label{tabOrderChecks}
\end{table}

The optimality of item (a) was already proved in Section~\ref{three}.  The optimality of the remaining items can be established by depth-first search.
For each pair $(\ell,\beta)$, a longest word containing no antisquares of order $\geq \ell$, but avoiding $\beta$-powers (instead of $\beta^+$), is given in
Table~\ref{tabOrderOptimality}.  The columns give $\ell$, $\beta$, the length $L$ of a
longest such word, and a longest such word.
\begin{table}[H]
\begin{center}
\begin{tabular}{|c|c|l|l|}
\hline
$\ell$ & $\beta$ & $L$ & example \\
\hline
4  & $8/3$ &  29 & 00100101001100101001100110100\\
\hline
5 & $5/2$ & 32 & 00100101100101101001011001011011\\
\hline
6 & $7/3$ & 30 & 001011001101001011010011001011\\
\hline
\end{tabular}
\end{center}
\caption{Longest words avoiding $\beta$-powers and containing no antisquares of order $\geq \ell$.}
\label{tabOrderOptimality}
\end{table}

\end{proof}

\section{The class \texorpdfstring{$\AN_n$}{AN}}
\label{AN_n}

In this section, we consider the class $\AN_n$ of binary words containing no more than $n$ distinct antisquares as factors.

\begin{theorem}\label{thm_AN}
There exists an infinite $\beta^+$-free binary word containing
no more than $n$ antisquares for the following pairs $(n, \beta)$:
\begin{itemize}
    \item[(a)] $(2,2+\alpha)$
    \item[(b)] $(3,3)$
    \item[(c)] $(6,8/3)$
    \item[(d)] $(9,38/15)$
    \item[(e)] $(10,5/2)$
    \item[(f)] $(15,17/7)$
    \item[(g)] $(16,7/3)$.
\end{itemize}
These are all optimal.
\end{theorem}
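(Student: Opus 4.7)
The plan is to mirror the structure of the proof of Theorem~\ref{thm_AO}. Part (a) requires nothing new: the upper bound is witnessed by the word $\mathbf{w}=g(\varphi^\omega(0))$ from Section~\ref{two}, which is good (so has exactly the two antisquares $01,10$) and has critical exponent $2+\alpha$, and the matching lower bound is the corollary at the end of Section~\ref{three}. For each of the remaining pairs $(n,\beta)$ in (b)--(g), I would produce an explicit uniform morphism $\psi_n$ and take the image under $\psi_n$ of some suitable infinite squarefree ternary word $\mathbf{t}$; the claim is then that $\psi_n(\mathbf{t})$ is $\beta^+$-free and contains at most $n$ distinct antisquares.

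The verification of $\beta^+$-freeness proceeds exactly as in Theorem~\ref{thm_AO}: check that each $\psi_n$ is synchronizing, then invoke \cite[Lemma~23]{Mol&Rampersad&Shallit:2020} to reduce to checking $\beta^+$-freeness of $\psi_n(u)$ for all squarefree ternary $u$ of some explicit length $t_n$. Bounding the antisquare factors is the step that has no analogue in Theorem~\ref{thm_AO} and is the main new ingredient. The strategy is to find a length $m_n$ such that any factor $v$ of $\psi_n(\mathbf{t})$ whose complement $\overline v$ is also a factor satisfies $|v|\le m_n$. Granting such an $m_n$, one exhaustively enumerates factors of $\psi_n(\mathbf{t})$ of length at most $2m_n$ and counts the distinct antisquares directly; showing there are at most $n$ of them then reduces to a finite computation. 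To obtain $m_n$ itself, I would extract from the images $\psi_n(0),\psi_n(1),\psi_n(2)$ a ``sufficiently long'' common fragment forcing any long $v\overline v$ to cross several synchronizing positions and thereby force a ternary factor and its complement to simultaneously be prefixes/suffixes of squarefree ternary words of bounded length.

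For optimality, I would proceed as in Theorem~\ref{thm_AO}: for each pair $(n,\beta)$, a depth-first search establishes that there is a finite longest binary word avoiding $\beta$-powers (not merely $\beta^+$-powers) and containing at most $n$ antisquares, together with an explicit witness of that maximal length. Presenting the list of those witnesses as a table analogous to Table~\ref{tabOrderOptimality} then closes each case. The only case where this DFS argument does not apply is $(n,\beta)=(2,2+\alpha)$ because $\beta$ is irrational, and for this case optimality was already established in Section~\ref{three}.

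The main obstacle will be identifying the correct seven morphisms $\psi_3,\psi_6,\psi_9,\psi_{10},\psi_{15},\psi_{16}$ and matching lengths $t_n,m_n$: the bound $m_n$ must be large enough that the antisquare count can actually reach $n$ (so that the pair is sharp), but small enough that the finite checks terminate in reasonable time. A secondary difficulty is the depth-first search for the fractional exponents $38/15$ and $17/7$, where the search tree for long candidate words is considerably larger than in Theorem~\ref{thm_AO}; here one would expect to rely on the same kind of backtracking code used in \cite{Shallit:2004} and the related prior work, with the final witnesses tabulated explicitly.
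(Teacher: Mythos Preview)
Your proposal is correct and follows essentially the same approach as the paper: morphic images $\zeta_n$ of squarefree ternary words, verified for $\beta^+$-freeness via \cite[Lemma~23]{Mol&Rampersad&Shallit:2020}, with the antisquare count bounded by first finding an $m_n$ such that any factor $v$ with $\overline{v}$ also a factor has $|v|\le m_n$, and optimality certified by depth-first search. One small correction: the $m_n$ step is not new relative to Theorem~\ref{thm_AO}---the paper uses exactly the same device there to bound the order of antisquares---so the only real difference in Theorem~\ref{thm_AN} is that after bounding $m_n$ one \emph{enumerates} the antisquares of length $\le 2m_n$ rather than merely checking none have order $\ge \ell$.
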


\begin{proof}
Item (a) was already proved in Section~\ref{two}.
For the remaining cases, the proof is similar to that of Theorem~\ref{thm_AO}:
for each pair $(n,\beta)$, we apply a morphism $\zeta_n$ to any ternary squarefree infinite word
${\bf w}$ and check that it has the desired properties.
The morphisms are given in Table~\ref{tab5}.  The
columns are $n$, the largest number of allowed
antisquares; $\beta$, where the word avoids $\beta^+$ powers;
$s$, the size of the uniform morphism; and the morphism.
\begin{table}[H]
\begin{center}
\resizebox{.87\columnwidth}{!}{%
\begin{tabular}{|c|c|c|l|l|}
\hline
$n$ & $\beta$ & $s$ & morphism & morphism\\
&&& name &  \\
\hline
3 & 3 & 13 & $\zeta_3$ & $0 \rightarrow 0010001000101$ \\
&&&& $1 \rightarrow 0001001000101$ \\
&&&& $2 \rightarrow 0001000100101$ \\
\hline
6 & $8/3$ & 36 & $\zeta_6$ & $0 \rightarrow 001001010011001010010011001001010011$ \\
&&&& $1 \rightarrow 001001010010011001010011001010010011$ \\
&&&&$2 \rightarrow 001001010010011001001010011001010011$ \\
\hline
9 & $38/15$ & 192 & $\zeta_9$ & 
$0 \rightarrow 
0010100101100110010100101100110010110010100101100110101100110010$\\
&&&&\quad\ \ \ 1100101001011001100101100101001011001101011001100101100101001011\\
&&&&\quad\ \ \ 0011001010011001010010110011001011001010010110011010110011001011 \\
&&&& $1 \rightarrow 0010100101100110010100101100110010110010100101100110101100110010$\\
&&&&\quad\ \ \ 1100101001011001100101001100101001011001100101100101001011001101\\
&&&&\quad\ \ \ 0110011001011001010010110011001011001010010110011010110011001011 \\
&&&& $2 \rightarrow 0010100101100110010100101100110010110010100101100110010100110010$\\
&&&&\quad\ \ \ 1001011001100101100101001011001100101001011001100101100101001011\\
&&&&\quad\ \ \ 0011001010011001010010110011001011001010010110011010110011001011 \\
\hline
10 & $5/2$ & 75 & $\zeta_{10}$ & $0 \rightarrow 0010100101100110010100110010100101100110010110010100101100110101$\\
&&&&\quad\ \ \ 10011001011 \\
&&&& $1 \rightarrow
0010100101100110010100110010100101100110010110010100101100110010$\\
&&&&\quad\ \ \ 10011001011 \\
&&&& $2 \rightarrow 
0010100101100110010100110010100101100110010100110010110010100101$\\
&&&&\quad\ \ \ 10011001011 \\
\hline 
15 & $17/7$ & 194 & $\zeta_{15}$ & $0 \rightarrow
00100110010110010011010010110010011001011001001101001011001001101$\\
&&&&\quad\ \ \ 00110010011010010110010011010011001001100101100100110100101100100\\
&&&&\quad\ \ \ 1101001100100110100101100100110010110010011010010110010011010011 \\
&&&& $1 \rightarrow 
00100110010110010011010010110010011001011001001101001011001001101$\\
&&&&\quad\ \ \ 00110010011010010110010011001011001001101001011001001101001100100\\
&&&&\quad\ \ \ 1100101100100110100101100100110100110010011010010110010011010011 \\
&&&& $2 \rightarrow 00100110010110010011010010110010011001011001001101001011001001101$\\
&&&&\quad\ \ \ 00110010011001011001001101001011001001101001100100110100101100100\\
&&&&\quad\ \ \ 1100101100100110100101100100110100110010011010010110010011010011 \\
\hline
16 & $7/3$ & 192 & $\zeta_{16}$ & $0 \rightarrow 0010011001011001001101001011001001101001100100110100101100110100$\\
&&&&\quad\ \ \ 1011001001101001100100110100101100100110010110010011010010110010\\
&&&&\quad\ \ \ 0110100110010011010010110010011001011001001101001011001101001011 \\
&&&& $1 \rightarrow
0010011001011001001101001011001001101001100100110100101100100110$\\
&&&&\quad\ \ \ 0101100100110100101100110100101100100110100110010011010010110010\\
&&&&\quad\ \ \ 0110010110010011010010110010011010011001001101001011001101001011 \\
&&&& $2 \rightarrow 0010011001011001001101001011001001101001100100110100101100100110$\\
&&&&\quad\ \ \ 0101100100110100101100100110100110010011010010110011010010110010\\
&&&&\quad\ \ \ 0110010110010011010010110011010010110010011010011001001101001011 \\
\hline
\end{tabular}
}
\end{center}
\caption{Morphisms generating words in $\AN_n$.}
\label{tab5}
\end{table}   

To verify the $\beta^+$-freeness of $\zeta_n({\bf w})$
we use~\cite[Lemma~23]{Mol&Rampersad&Shallit:2020}.  In order to do so,
we check that $\zeta_n$ is synchronizing and that $\zeta_n(u)$ is $\beta^+$-free for every squarefree
ternary word $u$ of length $t$, where $t$ is specified by~\cite[Lemma~23]{Mol&Rampersad&Shallit:2020}.
In order to show that $\zeta_n(\bf{w})$ contains at most $n$ distinct antisquares, we find a length
$m$ such that if $v$ and its complement are both factors of $\zeta_n(\bf{w})$, then $|v|\leq m$.
We can then enumerate the antisquares appearing in
$\zeta_n(\bf{w})$ and check that there are at most $n$ of them.  The parameters $t$ and $\ell$
for each $\zeta_n$ are given in Table~\ref{tab5b}.
\begin{table}[H]
\begin{center}
\begin{tabular}{|c|c|c|c|c|}
\hline
morphism & $n$ & $\beta$ & $t$ & $m$ \\
\hline
$\zeta_3$ & 3 & 3 & 6 & 4\\
$\zeta_6$ & 6 & 8/3 & 8 & 6\\
$\zeta_9$ & 9 & 38/15 & 9 & 17\\
$\zeta_{10}$ & 10 & 5/2 & 10 & 17\\
$\zeta_{15}$ & 15 & 17/7 & 11 & 12\\
$\zeta_{16}$ & 16 & 7/3 & 14 & 13\\
\hline
\end{tabular}
\end{center}
\caption{Parameters for checking correctness of $\zeta_n$.}
\label{tab5b}
\end{table}   

The optimality of item (a) was already proved in Section~\ref{three}.  The optimality of the remaining items can be established by depth-first search.
For each pair $(n,\beta)$, a longest word containing at most $n$ antisquares, but avoiding $\beta$-powers (instead of $\beta^+$), is given in
Table~\ref{tab6}.  The columns give $n$, $\beta$, the length $L$ of a
longest such word, and a longest such word.
\begin{table}[H]
\begin{center}
\begin{tabular}{|c|c|l|l|}
\hline
$n$ & $\beta$ & $L$ & example \\
\hline
5  & 3 &  17 & 00101001010010011\\
\hline
8 & $8/3$ & 52 & 0010010100110010100110011010011001101011001101011011\\
\hline
9 & $38/15$ & 407 & 00100101001101001010011010011001101001010011001010011\\
&&&00110100101001101001100110101100110100101001101001100\\
&&&11010010100110010100110011010010100110100110011010110\\
&&&01101001010011010011001101001010011010011001101011001\\
&&&10100101001101001100110100101001100101001100110100101\\
&&&00110100110011010110011010010100110100110011010010100\\
&&&11001010011001101001010011010011001101001010011001010\\
&&&011001101001010011001101001101011011\\
\hline
14 & $5/2$ & 92 & 001101001011001101100110100101100110110011010011 \\
&&& 01100110100101100110110011010011011001101100\\
\hline
15 & $17/7$ & 156  &0010110011010010110010011010011001001101001011001001\\
&&&1001011001001101001011001001101001100100110100101100\\
&&&1001100101100100110100101100100110010110010011001001\\
\hline
16 & 7/3 & 38 & 00101100101101001011001101001011001011 \\
\hline
\end{tabular}
\end{center}
\caption{Longest words with at most $n$ antisquares and avoiding $\beta$-powers.}
\label{tab6}
\end{table}
\end{proof}

\section{Minimal antisquares}
\label{manti}

Consider the language $L$ of all finite good words.
In this section we determine the
\emph{minimal antisquares} or \emph{minimal forbidden factors} 
for $L$ \cite{Mignosi&Restivo&Sciortino:2002}.
These are the words $w$ such that $w$ is an antisquare,
but $w$ properly contains no antisquare factors, except possibly $01$ and $10$.
This characterization will be useful for enumerating the number of length-$n$
words in $L$.

The goal is to prove the following theorem:
\begin{theorem}
The minimal antisquares are, organized by order $n$, as follows:
\begin{itemize}
\item $n = 1$:  $\{ 01, 10 \}$
\item $n = 2$:  $\{ 0011,0110, 1001, 1100 \}$
\item $n = 3$: $\{ 010101, 101010 \}$
\item $n = 4$:  $\emptyset$
\item $n \geq 5$:  all the $2n$ conjugates (cyclic shifts) of
$0^{n-2} 1 0 1^{n-2} 0 1$.
\end{itemize}
\label{minimal}
\end{theorem}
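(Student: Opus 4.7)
The cases $n \in \{1, 2, 3, 4\}$ follow by finite enumeration: for each such $n$ there are only $2^n$ binary antisquares of order $n$, and one checks for each whether every proper factor is good, using the minimal antisquares of smaller order as listed in the theorem statement. For $n = 4$ in particular, we exhibit, for each of the 16 candidates, a proper factor in the list $\{0011, 0110, 1001, 1100, 010101, 101010\}$.

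For $n \ge 5$, first handle sufficiency. Since $w_n := 0^{n-2} 1 0 \cdot 1^{n-2} 0 1$ has $y = 0^{n-2} 1 0$ and $\overline{y} = 1^{n-2} 0 1$, it is an antisquare; and cyclic shifts of antisquares are antisquares (a short verification), so all $2n$ conjugates of $w_n$ are antisquares of order $n$. Its cyclic run-length pattern is $(n-2, 1, 1, n-2, 1, 1)$, from which we verify: no internal run has length $2$ (ruling out $0110$ and $1001$); no two consecutive runs are both $\ge 2$ (ruling out $0011$ and $1100$); the longest alternating substring has length $4$ (ruling out $010101$ and $101010$); and for each $5 \le k < n$, no factor of length $2k$ of $w_n$ can be a conjugate of $w_k$, since any factor containing an entire length-$(n-2)$ block has a run strictly longer than $k - 2$, while factors avoiding both blocks are too short to match the cyclic run structure of any $w_k$.

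For uniqueness, I would proceed by strong induction on $n$. Let $w$ be a minimal antisquare of order $n \ge 5$ and encode it by its cyclic run-length sequence $(c_1, \ldots, c_M)$. The identity $w[i+n] = \overline{w[i]}$ says that cyclic rotation by $n$ characters equals complementation, which in run-length terms forces $M = 2k$ for some odd $k$, the cyclic sequence to have period $k$ in lengths, and $\sum_{i=1}^{k} c_i = n$. Avoiding the length-$4$ and length-$6$ minimal antisquares as proper factors translates cyclically to: $c_i \ne 2$ for all $i$; no two consecutive $c_i$ both $\ge 2$; no four consecutive $c_i = 1$. Let $b$ be the number of big runs (length $\ge 3$) in a fundamental period. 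Enumeration over $k$ and $b$ yields: $k = 1$ forces $w = 0^n 1^n$, which contains $0011$; $k = 3$ admits only $b = 1$ with period a rotation of $(n-2, 1, 1)$, giving precisely the $2n$ conjugates of $w_n$; and $k \ge 5$ admits only $b \ge 2$, in which case a direct computation locates $w_{B + 2}$ as a proper factor of $w$, where $B$ is the length of the shorter of two adjacent big runs. Since $5 \le B + 2 < n$, this contradicts minimality by the inductive hypothesis.

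The main obstacle is this last case: one must verify, across every cyclic configuration with $k \ge 5$ and $b \ge 2$, that a concrete smaller $w_j$ always appears as a proper factor of $w$. The key observation is that the antisquare structure places a big $0$-block of length $B_2$ and a big $1$-block of length $B_1$ sufficiently close cyclically that the factor $0^{\min(B_1, B_2)} \, 1 \, 0 \, 1^{\min(B_1, B_2)} \, 0 \, 1$ appears at a predictable offset within $w$.
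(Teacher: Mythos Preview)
Your cyclic run-length approach is genuinely different from the paper's and is fundamentally sound. The paper works with the \emph{linear} run-length encoding of a minimal antisquare, splits into cases by the number of runs ($2$, $3$, $6$, $7$, $\geq 10$), and for $\geq 10$ runs proves two technical lemmas showing that the run sequence must contain one of three forbidden patterns (two consecutive entries $\geq 2$; an interior $2$; or a block $a\,1\,b\,c\,1\,d$ with $a\geq c$, $b\leq d$). You instead exploit directly the periodic structure of the \emph{cyclic} run sequence: the antisquare relation forces the cyclic run vector to have even length $2k$ with $k$ odd and period $k$, which unifies the case analysis into $k=1$, $k=3$, $k\geq 5$. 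This buys you a cleaner induction and avoids the odd/even split the paper needs for its two lemmas.

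Two points need tightening. First, the passage from ``$w$ has no proper antisquare factor of order $\geq 2$'' to the cyclic constraints ($c_i\neq 2$, no two consecutive $c_i\geq 2$, no four consecutive $c_i=1$) is not automatic: a boundary run of $w$ could violate one of these without producing a forbidden factor. The paper handles this via a separate lemma that minimality is preserved under conjugation; you can either prove that, or observe that the period-$k$ structure places a second copy of any offending run in the interior. Second, and more importantly, your ``direct computation'' for $k\geq 5$ is missing its engine. You speak of ``two adjacent big runs,'' but big runs are never adjacent; what you need is two \emph{consecutive} big runs (in the cyclic list of big runs) separated by exactly two length-$1$ runs, so that the local pattern is $(\geq 3,\,1,\,1,\,\geq 3)$ with opposite colours on the two big runs. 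The existence of such a pair is not obvious from your sketch, and your ``key observation'' does not supply it. The clean argument is a parity count: the gaps between consecutive big runs lie in $\{2,3,4\}$ (gap $\leq 1$ contradicts ``no two consecutive $\geq 2$''; gap $\geq 5$ gives four consecutive $1$'s), and these gaps sum to the odd number $k$, so at least one gap equals $3$. That gap, together with the length-$1$ run immediately preceding (or following) it, yields a factor of the form $0\,1\,0^{B}\,1\,0\,1^{B}$ or $0^{B}\,1\,0\,1^{B}\,0\,1$ with $B=\min(B_i,B_{i+3})\geq 3$, which is a conjugate of $w_{B+2}$. You should also check that this factor lands inside the linear word $w$ (not merely the cyclic word); the period-$k$ duplication again rescues you here. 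Once these details are supplied, your argument goes through and is arguably tidier than the paper's pair of lemmas.
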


We start with some basic results about antisquares.

\begin{lemma}
If $x$ is an antisquare, so is every conjugate of $x$.
\label{minanti1}
\end{lemma}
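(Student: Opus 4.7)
The plan is to reformulate the antisquare property cyclically and then observe that cyclic shifts preserve it. The key reformulation: if $x = y\overline{y}$ with $|y| = n$, then for every $j \in \{1,\ldots,n\}$ we have $x_j \neq x_{j+n}$. But this also gives, for $j \in \{n+1,\ldots,2n\}$, writing $j = k+n$, that $x_j = \overline{x_k} = \overline{x_{j-n}}$. So if we treat indices modulo $2n$ (viewing $x$ as a cyclic word of length $2n$), the statement ``$x$ is an antisquare'' is equivalent to the symmetric condition
\[
x_i \neq x_{i+n} \qquad \text{for all } i \in \mathbb{Z}/2n\mathbb{Z}.
\]

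With this in hand, the proof is essentially immediate. I would let $x'$ be the conjugate obtained by cyclically shifting $x$ by $s$ positions, so $x'_j = x_{((j+s-1)\bmod 2n)+1}$. Then
\[
x'_{j+n} = x_{((j+n+s-1)\bmod 2n)+1} = \overline{x_{((j+s-1)\bmod 2n)+1}} = \overline{x'_j}
\]
for each $j \in \{1,\ldots,n\}$, using the cyclic antipodal property above. Writing $z = x'_1 x'_2 \cdots x'_n$, this shows $x' = z\overline{z}$, so $x'$ is an antisquare.

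There is no real obstacle here: the only thing to be careful about is the mod-$2n$ index arithmetic, specifically verifying that the antipodal relation $x_i \neq x_{i+n}$ holds for indices $i > n$ as well (which is what lets the cyclic shift go through cleanly). Once the cyclic reformulation is in place, the conclusion is a one-line index substitution.
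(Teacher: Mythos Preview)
Your proof is correct. You reformulate ``$x$ is an antisquare of length $2n$'' as the cyclic antipodal condition $x_i \neq x_{i+n}$ for all $i \in \mathbb{Z}/2n\mathbb{Z}$, and then observe that any property stated purely in terms of cyclic indices is automatically conjugacy-invariant. The only delicate point is the index bookkeeping, and you handle it correctly.

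The paper takes a slightly different route: it writes $x = a y\,\overline{a}\,\overline{y}$ with $a$ a single letter, observes that the one-step cyclic shift $y\,\overline{a}\,\overline{y}\,a$ is again visibly of the form $z\overline{z}$ with $z = y\overline{a}$, and then inducts on the shift length. Both arguments are short; the paper's is a direct factor manipulation that avoids any index arithmetic, while yours invests a few lines in the cyclic reformulation and then gets all conjugates at once without induction. Your antipodal characterization is arguably the more conceptual explanation of \emph{why} the property is conjugacy-invariant, and it could be reused elsewhere (e.g., it immediately shows that the reversal of an antisquare is an antisquare).
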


\begin{proof}
Write $x$ as $ay \overline{a} \overline{y}$ for some (possibly empty) word $y$.
Then a cyclic shift by one symbol gives $y \overline{a} \overline{y} a$,
which is clearly an antisquare.  Repeating this argument $|x|$ times gives the result.
\end{proof}

\begin{lemma}
A word $w$ is a minimal antisquare if and only if all conjugates of $w$ are minimal antisquares.
\label{minanti2}
\end{lemma}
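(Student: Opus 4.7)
The ``if'' direction is trivial, since $w$ is itself one of its conjugates. For the ``only if'' direction I would argue by contrapositive: assume some conjugate $w'$ of $w$ fails to be a minimal antisquare; the goal is to show that $w$ itself fails. By Lemma~\ref{minanti1}, $w'$ is still an antisquare, so non-minimality of $w'$ means $w'$ has a proper factor $z$ that is an antisquare with $z\notin\{01,10\}$, hence $|z|\geq 4$.

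The first step is to observe that $w$ and $w'$ are cyclic shifts of each other, so $w^2$ and $(w')^2$ share the same set of factors of length at most $|w|$. In particular $z$ is a factor of $w^2$. Write $n=|w|$, $m=n/2$, $k=|z|/2$, and let $a$ be the cyclic position of $z$ in $w$. If $a+2k\leq n$, then $z$ already occurs as a proper linear factor of $w$ of length at least~$4$, contradicting minimality; so we may assume $a+2k>n$, i.e., the occurrence of $z$ wraps around the boundary of $w$.

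The heart of the proof is the wrap-around case. Combining the antisquare identities $z[i]\neq z[i+k]$ and $w[j]\neq w[j+m]$ (cyclically) forces $w[a+i+k]=w[a+i+m]$ for $0\leq i<k$, so $w$ has period $p:=m-k$ along $k$ consecutive cyclic positions. I would then split according to whether $k\leq m/2$ or $k>m/2$. When $k\leq m/2$, the antisquareness of $w$ translates the factor $z=y\overline{y}$ at cyclic position $a$ into the factor $\overline{y}y$ (another antisquare of length $2k\geq 4$) at cyclic position $(a+m) \bmod n$; since the wrap hypothesis gives $a>n-2k\geq m$, one checks that this new antisquare fits entirely inside $w$ as a proper linear factor, contradicting minimality.

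The main obstacle is the opposite subcase $k>m/2$, where the complemented antisquare again wraps, so the previous shortcut fails. Here one must exploit the forced internal period $p=m-k<m/2$ over a range of length $k>p$. Propagating this local period across $w$ via the complementation relation $w[j+m]=\overline{w[j]}$ shows that $w$ is forced into a very restricted structure, and a case analysis on the resulting patterns produces, in every configuration, a proper linear antisquare factor of $w$ of length in $[4,n-1]$ (for instance, a length-$4$ antisquare such as $0011$ or $1100$, or a short periodic block like $010101$ or $101010$). This final contradiction completes the proof.
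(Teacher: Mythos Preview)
Your argument for the ``if'' direction and for the non-wrapping case is fine, and your treatment of the subcase $k\leq m/2$ is correct: the shifted antisquare $\overline{y}y$ indeed lands entirely inside $w$ as a proper factor, exactly as you claim.

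The subcase $k>m/2$, however, is a genuine gap. You correctly derive the cyclic periodicity $w[a+i+k]=w[a+i+m]$ for $0\le i<k$, but the remainder---``propagating this local period across $w$ \ldots\ a case analysis on the resulting patterns produces, in every configuration, a proper linear antisquare factor''---is a promissory note, not a proof. The period $p=m-k$ only constrains a window of length $m$ inside a word of length $2m$, and the complementation relation by itself does not extend this to all of $w$; there is no reason the resulting ``restricted structure'' should always contain one of the short antisquares you list. You would have to actually perform that case analysis, and it is not clear it terminates cleanly.

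The paper avoids this difficulty by a different device: it chooses $rs$ to be a \emph{shortest} antisquare of length~$>2$ occurring in \emph{any} rotation of $w$, not merely in the fixed rotation $w'$. Writing $w=sxr$, it then splits on whether $|r|\ge |\overline u|$, $|s|\ge |u|$, or both are $<|u|$. In the hard case $|r|\ge|\overline u|$ (which corresponds to much of your $k>m/2$ range), setting $r=t\overline u$ one computes $rs=t\overline s\,\overline x\,\overline t s$; splitting $x=x_1x_2$ with $|x_1|=|x_2|$ immediately yields the strictly shorter antisquare $t\overline s\,\overline t s$, contradicting the minimality of $|rs|$. That is the missing idea in your argument: leverage minimality of the antisquare over all rotations, rather than trying to force a short antisquare into the original linear word $w$ via period propagation.
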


\begin{proof}
Let $w=u\bu$. Let us prove the forward direction first.
Assume, contrary to what we want to prove, that $w$ is a minimal antisquare, but some rotation of $w$ contains a shorter antisquare, 
other than $01$ and $10$. Let $rs$ be an antisquare of minimum length with $|rs|>2$ that is not a factor of $w$, but is a factor of some rotation of $w$. Then we can write $w=sxr$ with $r,s,x\neq \epsilon$. There are three cases to consider:

\medskip

$|r|\ge|\bu|$: Let $t$ be defined
by $r=t\bu$.
    Then $w=u\bu=sxt\bs\bx\bt$, where $rs=t\bs\bx\bt s$ is an antisquare. If $t=\epsilon$, then $rs=\bs\bx s$; hence $w=sx\bs\bx$ contains the antisquare $sx\bs$. This is a contradiction, since we assumed $w$ does not have a shorter antisquare, but $sx\bs$ is an antisquare in $w$ with $s,x\neq\epsilon$. Therefore $t\neq\epsilon$.
    
    Since $w$ and $rs$ are both antisquares, they have even length. Therefore $|x|$ is even. Consider the factorization $x=x_1x_2$ with $|x_1|=|x_2|$. Then $rs=t\bs\bx\bt s = t\bs\bx_1\bx_2\bt s$ is an antisquare, where $|t\bs\bx_1| = |\bx_2\bt s|$. Therefore, $t\bs\bx_1$ must end with $t\bs$ and $\bx_2\bt s$ must begin with $\bt s$, giving the antisquare $t\bs\bt s$, which is not $01$ or $10$ (since $t,s\neq\epsilon$), and is shorter than $rs$. This contradicts our assumption that $rs$ is the smallest such antisquare.
    
    \medskip
    
    $|s|\ge|u|$: Let $s$ be defined by $s=ut$.
    Then $w=u\bu=\bt\bx\br txr$.
    Now $rs=r\bt\bx\br t$ is an antisquare, and hence the complement $\br\bs=\br txr\bt$ is also an antisquare.
    Let $r'=\br txr$ and $s'=\bt$. So we can write $w=\bt\bx\br txr = s'\bx r'$ where $r's'$ is an antisquare of the same length as $rs$, and $|r'| > |\bu|$. This reduces the problem to the previous case.
    
    \medskip
    
    $|s|<|u|,|r|<|\bu|$: In this case we can write $u=sy$ and $\bu=zr$ where $y,z\neq\epsilon$.  This means that $u$ ends in $\br$ and $\bu$ begins with $\bs$, implying that $w$ contains the antisquare $\br\bs$. This contradicts the assumption that $w$ does not contain a shorter antisquare.

\medskip

The reverse direction is easy. If $w$ contains a shorter antisquare, other than $01$ and $10$, then at least one rotation of $w$ also has the same antisquare.
\end{proof}

We
introduce some terminology.  A {\it run\/} in a word is a maximal block of consecutive identical symbols.   The {\it run-length encoding\/}
$r: \Sigma^* \rightarrow \naturals^*$, where
$\naturals = \{ 1,2,3,\ldots \}$
is a map sending a word $x$ to the list of the
lengths of the consecutive runs in $x$.
For example, $r({\tt access}) = 1212$.

\begin{lemma}
If a nonempty word $x$ is an antisquare, then the number of runs it contains must be
congruent to $2$ or $3$ $({\rm mod}\ 4)$.
\label{runlength}
\end{lemma}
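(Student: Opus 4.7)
The plan is to write $x = y\overline{y}$ with $y$ nonempty, and analyze the run count of $x$ directly from the run structure of $y$. Suppose $y$ has $k\ge 1$ runs, so we may write $y = c_1^{r_1} c_2^{r_2} \cdots c_k^{r_k}$ where $c_1, c_2, \ldots, c_k \in \{0,1\}$ alternate and each $r_i \ge 1$. Then $\overline{y} = \overline{c_1}^{r_1} \overline{c_2}^{r_2} \cdots \overline{c_k}^{r_k}$, which also has exactly $k$ runs. The total number of runs in $x$ is therefore either $2k$ or $2k-1$, depending on whether the final run $c_k^{r_k}$ of $y$ merges with the initial run $\overline{c_1}^{r_1}$ of $\overline{y}$ (i.e., whether $c_k = \overline{c_1}$).

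The key step is a parity case analysis. Since the $c_i$ alternate, we have $c_k = c_1$ when $k$ is odd and $c_k = \overline{c_1}$ when $k$ is even. In the first case, $c_k \ne \overline{c_1}$, so no merging occurs and $x$ has $2k$ runs; writing $k = 2m+1$ this gives $4m+2 \equiv 2 \pmod 4$. In the second case, $c_k = \overline{c_1}$, so the two boundary runs fuse into one and $x$ has $2k-1$ runs; writing $k = 2m$ this gives $4m-1 \equiv 3 \pmod 4$.

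Combining both cases yields the desired congruence. The only subtle point, and the closest thing to an obstacle, is making sure that the boundary merger is handled correctly in the run count: one must note that runs of $y$ other than the last never merge with anything in $\overline{y}$ (they are fully determined inside $y$), and similarly for runs of $\overline{y}$ other than the first, so the only possible coalescence occurs at the single index between $y$ and $\overline{y}$. Given this, the argument is a clean two-case computation.
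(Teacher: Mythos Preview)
Your proof is correct and follows essentially the same approach as the paper's: both write $x=y\overline{y}$, split on the parity of the number $k$ of runs in $y$, observe that the boundary runs merge precisely when $k$ is even, and compute $2k\equiv 2\pmod 4$ (odd case) or $2k-1\equiv 3\pmod 4$ (even case). Your write-up is slightly more explicit about the run notation, but the argument is the same.
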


\begin{proof}
Write $x = u \overline{u}$ and consider 
the runs in $u$.  If $u$ has $2k+1$ runs,
then so does $\overline{u}$.  Furthermore,
$u$ ends in a different letter
than the start of $\overline{u}$.  Hence
$x$ has $4k+2$ runs.

Otherwise $u$ has $2k$ runs.  Then the last letter of $u$ is the same
as the first letter of $\overline{u}$.
Hence
$x$ has $4k-1$ runs.
\end{proof}

\begin{lemma}
The word $0^k 10 \, 1^k 01$ is a minimal antisquare for $k \geq 3$.
\label{minanti3}
\end{lemma}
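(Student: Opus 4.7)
The plan is first to verify that $w = 0^k 1 0 1^k 0 1$ is itself an antisquare, and then to show that the only proper antisquare factors of $w$ are $01$ and $10$. For the first part, set $u = 0^k 1 0$; then $\overline{u} = 1^k 0 1$, so $w = u\,\overline{u}$ is an antisquare of order $k+2$.

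For the minimality claim I plan to apply Lemma~\ref{runlength}: every antisquare has a number of runs congruent to $2$ or $3$ modulo $4$. The run-length encoding of $w$ is $(k,1,1,k,1,1)$, so $w$ has exactly six runs and any factor has at most six; the admissible run counts for an antisquare factor are thus $2$, $3$, and $6$. The structural feature of $w$ I will exploit throughout is that the only runs of $w$ of length $\ge 2$ are $0^k$ and $1^k$, and each is bordered on its interior side by a singleton run of the opposite letter.

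In the two-run case, an antisquare must have the form $a^p b^p$; but no two adjacent runs of $w$ both have length $\ge 2$, so the only two-run antisquare factors are $01$ and $10$. In the three-run case, a short calculation shows any antisquare has the form $a^p b^{p+r} a^r$ with $p, r \ge 1$, so the middle run has length $p+r \ge 2$ and must coincide with $0^k$ or $1^k$; but the neighboring runs of $w$ are either too short (they have length $1$, forcing $p + r \le 2 < k$) or nonexistent (when $0^k$ sits at the very start of $w$), yielding a contradiction in both sub-cases. In the six-run case, since the last run of $w$ has length $1$ and therefore cannot be truncated, such a factor must extend to the end of $w$ and so has the form $v = 0^p 1 0 1^k 0 1$ for some $1 \le p \le k$. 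Writing $v = y\,\overline{y}$ (which forces $p+k$ even and $|y| = (p+k)/2 + 2$) and locating the midpoint within the long run $1^k$, one computes $y = 0^p 1 0 1^{(k-p)/2}$, so $\overline{y} = 1^p 0 1 0^{(k-p)/2}$, while the actual last half of $v$ equals $1^{(p+k)/2} 0 1$; comparing tails forces $(k-p)/2 = 0$, i.e., $p = k$ and $v = w$.

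The main obstacle is the six-run case, which requires locating the midpoint of $v$ inside the long middle run $1^k$ and carrying out the explicit complement comparison sketched above. The remaining cases reduce to careful but straightforward run-length bookkeeping built on the observation that the long runs of $w$ are insulated from each other by singleton runs.
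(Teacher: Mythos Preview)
Your proof is correct and follows essentially the same approach as the paper: both invoke Lemma~\ref{runlength} to reduce to the cases of $2$, $3$, or $6$ runs, and the $2$- and $3$-run cases are handled identically. The only difference is in the $6$-run case, where the paper first observes that any six-run antisquare must have run-length pattern $(i,j,l,i,j,l)$ and then checks that $0^{\ell}101^{\ell}01$ with $\ell<k$ is not a factor, whereas you first parametrize the six-run factors as $0^{p}101^{k}01$ and then directly compare halves to force $p=k$; these are dual versions of the same argument.
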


\begin{proof}
It is easy to see that
$x = 0^k 10 \, 1^k 01$ is an antisquare.  Suppose $k \geq 3$ and  suppose $x$ has an antisquare proper factor $w$ other than $01$ and $10$.  Now $x$ has six runs, so by Lemma~\ref{runlength} we know that $w$ has either two, three, or six runs.

If $w$ has two runs, it must
be of the form $a^i \overline{a} ^i$ for
$i \geq 2$, but inspection shows that $x$ has no factor of that form.  

If $w$ has three runs, it must be of the form $a^i \overline{a}^{i+j} a^j$ for some $i, j \geq 1$.   The only possibility
is $i = 1$, $i+j=k$, $j=1$,
which forces $k = 2$, a contradiction.  

Finally, if $w$ has six runs,
then $w = 0^{\ell} 1 0 1^{\ell} 0 1$ for some $\ell < k$, which would not be a factor of $x$.
\end{proof}

We are now ready to complete the proof
of Theorem~\ref{minimal}.   We say $x$ has an {\it interior occurrence\/} of $y$ if we can write
$x = wyz$ for nonempty words $w,z$.

\begin{proof}[Proof of Theorem~\ref{minimal}.]
By combining Lemmas~\ref{minanti1}, \ref{minanti2}, and \ref{minanti3}, and verifying the listed cases for
$n \leq 4$, we see that all the words given in the statement of the theorem are minimal antisquares.

It now remains to see that there are no other minimal antisquares.  The idea is to classify antisquares $x$ by the number of runs.  In what follows, we assume, without loss of generality, that $x$ begins with $0$.

\medskip

Two runs:   then $x = 0^i 1^i$ for some
$i \geq 1$.  If $i \geq 3$ then $x$ contains
the antisquare $0011$.  So the only minimal antisquares are $01$ and $0011$.

\medskip

Three runs:  then $x = 0^i 1^{i+j} 0^j$.  If either $i$ or $j$ is at least $2$,
then $x$ contains the antisquare $0011$ or $1100$.
So the only minimal antisquare is $0110$.

\medskip

Six runs:  then $x = 0^i 1^j 0^k 1^i 0^j 1^k$.
If $i,j \geq 2$ then $x$ contains the antisquare
$0011$, and similarly if $j,k \geq 2$ and
$k,i \geq 2$.  
It follows that $(i,j,k) \in \{ (1,1,1), (1,1,n), (1,n,1), (n,1,1) \}$ for
$n \geq 2$.  The cases $(1,1,2), (1,2,1), (2,1,1)$ are ruled
out by an antisquare of the form $1001$ or
$0110$.  So $(i,j,k) \in \{ (1,1,1), (1,1,n), (1,n,1), (n,1,1) \}$ for
$n \geq 3$.   The case $(1,1,1)$ corresponds
to the word $010101$, and the remaining
cases correspond to certain conjugates of
$0^n 1 0 1^n 0 1$ for $n \geq 3$, already listed
in the statement of the theorem.

\medskip

Seven runs:  then $x = 0^i 1^j 0^k 1^{i+l} 0^j 1^k 0^l$.  Again, if $i,j \geq 2$, or $j,k \geq 2$, or $k, l \geq 2$, then $x$ contains a shorter antisquare $0011$ or $1100$.  Since $i+l \geq 2$, the same argument rules out $k \geq 2$ and $j \geq 2$.  So the only cases remaining
are $$(i,j,k,l) \in \{(1,1,1,1), (1,1,1,n),(n,1,1,1), (n,1,1,n) \}$$ for
$n \geq 2$.  The first case $(1,1,1,1)$
corresponds to $010110101$, which has the
antisquare $0110$, and it is easy to verify that the remaining cases are certain conjugates of $0^{n+1} 1 0 1^{n+1} 0 1$ for $n \geq 3$, already listed in the statement of the theorem.

It now remains to handle the case of more than $7$ runs.  By Lemma~\ref{runlength} $x$
has at least 10 runs.
This involves a rather tedious examination of cases, based on the following three simple observations: 
\begin{itemize}
    \item[(a)]
if $r(x)$ contains two consecutive terms, both $\geq 2$, then $x$
contains the shorter antisquare $0011$ or $1100$;
    \item[(b)] if $r(x)$ contains six
    consecutive terms $a1bc1d$ with
    $a \geq c$ and $b \leq d$, then 
    $x$ contains the shorter antisquare
    $0^c 1 0^b 1^c 0 1^b$ or its complement.
    
    \item[(c)] if $r(x)$ contains an interior 
    occurrence of $2$, then $x$ contains
    the antisquare $0110$ or $1001$.
\end{itemize}

Suppose $x = u \overline{u}$.  If 
$z = r(u)$ is of odd length, then $zz = r(x)$.
If $z = r(u)$ is of even length, then writing $z = ayb$
with $a, b$ single numbers, we have $r(x) = ay(a+b)yb$.
When we speak of a maximal $1$-block in what follows,
we mean one that cannot be extended by additional $1$'s
to the left or right.

It now suffices to prove the following two lemmas:

\begin{lemma}
Let $z \in \naturals^*$, and suppose $|z| \geq 5$ is odd.   Then $zz$ contains either
\begin{itemize}
\item[(a)] two consecutive
terms that are $\geq 2$, or
\item[(b)] six consecutive terms $a1bc1d$ with
    $a \geq c$ and $b \leq d$.
\end{itemize}
\end{lemma}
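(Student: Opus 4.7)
My plan is to prove the contrapositive: assuming (a) fails, I derive (b). If $zz$ contains no two consecutive entries both $\geq 2$, then in particular $z_n$ and $z_1$ (which are adjacent in $zz$) are not both $\geq 2$, so no two \emph{cyclically} consecutive entries of $z$ are both $\geq 2$. Moreover, since $|z|=n\geq 5$, every length-6 cyclic factor of $z$ appears as a genuine length-6 factor of $zz$ (a window starting at cyclic position $q \leq n$ ends at position $q+5 \leq 2n$). So it suffices to exhibit a length-6 cyclic window in $z$ of the form $a1bc1d$ with $a \geq c$ and $b \leq d$. I will decompose $z$ cyclically as $1^{a_1} x_1 \, 1^{a_2} x_2 \cdots 1^{a_m} x_m$ with each $a_i \geq 1$ and each $x_i \geq 2$, where $m=0$ stands for $z=1^n$.

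Two easy cases dispose of most of the argument. If $m=0$, the window $111111$ trivially fits the pattern with $a=b=c=d=1$. If some $a_i\geq 4$, I take the cyclic window starting one position before that 1-run; its values are $x_{i-1},1,1,1,1,v$, where $v=1$ when $a_i\geq 5$ and $v=x_i$ when $a_i=4$. This matches $a1bc1d$ with $a=x_{i-1}\geq 1=c$ and $b=1\leq v=d$, so (b) holds.

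The remaining case is $m\geq 1$ with all $a_i\leq 3$, and here I will invoke a parity argument that is the key conceptual step. If every $a_i\in\{1,3\}$, then each $a_i$ is odd, so $\sum a_i\equiv m\pmod 2$, whence $n=\sum a_i+m$ is even, contradicting that $n$ is odd. Thus some $a_i=2$. Let that 1-block occupy positions $p,p+1$; then $z_{p-1}=x_{i-1}$, $z_{p+2}=x_i$, while $z_{p-2}$ and $z_{p+3}$ are both $1$. I will examine two cyclic windows: the window $z_{p-1},z_p,z_{p+1},z_{p+2},z_{p+3},z_{p+4}=x_{i-1},1,1,x_i,1,z_{p+4}$ matches $a1bc1d$ with the required inequalities iff $x_{i-1}\geq x_i$; the window $z_{p-3},z_{p-2},z_{p-1},z_p,z_{p+1},z_{p+2}=z_{p-3},1,x_{i-1},1,1,x_i$ matches iff $x_{i-1}\leq x_i$. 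Since at least one of these two inequalities holds, (b) is established.

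The main subtlety is spotting the parity obstruction in the third case; without it one might fear a bad example alternating $1$-blocks and $3$-blocks around the $x_i$'s, but the parity of $n$ forbids such a configuration. Everything else reduces to direct bookkeeping of the six entries forming the window.
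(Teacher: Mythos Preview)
Your proof is correct. The key ideas---reducing to the structure of $1$-blocks once (a) fails, handling long $1$-blocks via a $b1111c$ window, using parity to force a $1$-block of length~$2$, and then comparing two overlapping six-term windows around it---are the same ones the paper uses.

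Where you differ is in organization. You pass immediately to the \emph{cyclic} decomposition $1^{a_1}x_1\cdots 1^{a_m}x_m$, observing that every length-$6$ cyclic window of $z$ is already a factor of $zz$ once $|z|\geq 5$. This lets your parity argument finish in one stroke: if every $a_i\in\{1,3\}$ then $n=\sum a_i+m$ is even, contradiction, so some $a_i=2$. The paper works linearly and must first argue about whether $z$ begins or ends with a $1$; in the case where all maximal $1$-blocks of $z$ have length $1$ or $3$, the paper does not derive a contradiction but instead shows $z$ begins and ends in $1$ and then exploits the junction in $zz$ to manufacture the needed $11$. Your cyclic viewpoint absorbs that sub-case automatically, so your case analysis is shorter. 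Both arguments then converge on the same pair of windows $x_{i-1}\,1\,1\,x_i\,1\,\ast$ and $\ast\,1\,x_{i-1}\,1\,1\,x_i$ around a length-$2$ block of $1$'s, with the dichotomy $x_{i-1}\geq x_i$ versus $x_{i-1}\leq x_i$ finishing the proof.
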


\begin{proof}
If condition (a) is not satisfied, then $z$ consists of isolated occurrences
of numbers $\geq 2$, separated by blocks of consecutive $1$'s.  We assume this
in what follows.

If $z$ both begins and ends with a number $\geq 2$, then $zz$ satisfies (a).
Thus we may assume that $z$ either begins or ends with $1$ (or both).

Suppose $z$ contains the block $1111$.  Then $zz$ contains the block $b1111c$ for
$b,c \geq 1$, and hence satisfies (b).    Thus we may assume that the maximal $1$-blocks in $z$ are of length $1$, $2$, or $3$.

Suppose all the maximal $1$-blocks of $z$ are of length $1$ or $3$. If $z$
begins with $1$ and ends with $b \geq 2$, then $z$ cannot be of odd length,
and similarly if $z$ ends with $1$ and begins with $b \geq 2$.
So $z$ must begin and end with $1$.
Since $|z| \geq 5$, we know $z$ has a prefix of the form $1c1d$ and a suffix of the
form $a1b1$, where $a,b,c,d \geq 1$.   Hence $zz$ contains the block
$a1b11c1d$.   If $b \leq c$, then the block $a1b11c$ fulfills condition (b);
if $b \geq c$, then the block $b11c1d$ fulfills condition (b).

Thus there must be a maximal $1$-block of length $2$ in $z$.   Then $zz$
contains two blocks, one of the form $a1b11c$ and one of the form $b11c1d$, where $a,d \geq 1$ and $b,c \geq 2$.
 If $b \leq c$, then the block $a1b11c$ fulfills condition (b);
if $b \geq c$, then the block $b11c1d$ fulfills condition (b).
\end{proof}

\begin{lemma}
Let $z \in \naturals^*$, and suppose $|z| \geq 6$ is even, and write
$z = ayb$.   Define $z' = ay(a+b)yb$.  Then $z'$ contains either
\begin{itemize}
\item[(a)] two consecutive
terms that are $\geq 2$, or
\item[(b)] six consecutive terms $a1bc1d$ with
    $a \geq c$ and $b \leq d$, or
\item[(c)] an interior occurrence of $2$.
\end{itemize}
\end{lemma}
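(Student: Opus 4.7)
The plan is to argue by contradiction. Assume that $z'$ fails all of (a), (b), and (c); I will deduce enough structure from the failure of (a) and (c) to exhibit a window of six consecutive entries witnessing (b).

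First, I would extract the consequences of failing (a) and (c). The middle entry $a+b$ of $z'$ is an interior position, and by failure of (c) it cannot equal $2$; since $a,b\geq 1$, this forces $a+b\geq 3$. The entries flanking $a+b$ in $z'$ are $y_k$ and $y_1$, and failure of (a) applied to these two pairs forces $y_1=y_k=1$. More generally, every interior entry of $z'$ lies in $\{1\}\cup\{3,4,\ldots\}$, and the non-$1$ interior entries are pairwise non-adjacent.

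Second, I would establish a uniform sufficient condition for (b): any four consecutive $1$s in $z'$, say at positions $i,i+1,i+2,i+3$ with $i\geq 2$ and $i+4\leq|z'|$, witness (b) via the window $[i-1,i+4]$, which has shape $p\,1\,1\,1\,1\,s$ with the $q$- and $r$-entries both equal to $1$, so the inequalities $p\geq r$ and $q\leq s$ hold automatically. Thus under failure of (b), every maximal $1$-block of $z'$ of length at least $4$ must abut an extreme end of $z'$. Translating this back to $y$ via the block structure of $z'=ay(a+b)yb$---whose $1$-blocks have lengths $L-1$, gaps $i_{j+1}-i_j-1$ between consecutive non-$1$s in $y$, $k-R$, and so on, where $L$ and $R$ denote the leftmost and rightmost non-$1$ indices in $y$---yields sharp bounds on $L$, $R$, and the inter-spacings.

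Third, I would dispose of the small number of surviving configurations by direct window analysis. The key trick, mirroring the proof of the preceding lemma, is to compare two symmetric windows straddling $a+b$: one positioned to its left and one to its right. These furnish a pair of complementary constraints of the form $t\leq a+b$ and $t\geq a+b$ on the isolated non-$1$ value $t$ involved, so at least one of them is satisfied and supplies a window witnessing (b). The main obstacle is the base case $k=4$ (so $|y|=4$), where the window slots barely fit and the boundary entries $a$ and $b$ must be inspected in their own right, since the long-block reduction of the second step does not quite apply to edge blocks; once this base case is settled, the larger-$k$ configurations follow essentially automatically from the block-length restrictions and the long-block reduction.
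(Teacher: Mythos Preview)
Your first two steps are sound and in fact recover the same block-length constraints the paper derives. The gap is in the third step: the ``key trick'' you propose—comparing two windows straddling the central entry $a+b$—does not cover all surviving configurations. Take $|y|=8$ (so $|z|=10$), $y=1\,t_1\,1\,1\,t_2\,1\,1\,1$ with $t_1,t_2\geq 3$, and any $a,b$ with $a+b\geq 3$. Then
\[
z'=a,\,1,\,t_1,\,1,\,1,\,t_2,\,1,\,1,\,1,\,(a+b),\,1,\,t_1,\,1,\,1,\,t_2,\,1,\,1,\,1,\,b
\]
passes your steps 1 and 2 (no two consecutive entries $\geq 2$, no interior $2$, no interior $1$-block of length $\geq 4$), yet every six-term window containing $a+b$ either fails to have the shape $a'1b'c'1d'$ or violates one of the two inequalities. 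Condition~(b) does hold here, but the witnessing windows sit over the interior $11$-block between $t_1$ and $t_2$, far from $a+b$: the windows at positions $[1,6]$ and $[3,8]$ give the complementary conditions $t_1\leq t_2$ and $t_1\geq t_2$. Infinitely many analogous configurations arise as $|y|$ grows, so they do not ``follow essentially automatically'' from the straddling-$(a+b)$ device.

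What is missing is the parity argument that drives the paper's proof. After bounding all interior $1$-block lengths by $3$, the paper observes that if every maximal $1$-block of $z$ had length $1$ or $3$ then, since $|z|$ is even, exactly one end of $z$ is $1$; together with the earlier exclusions this forces $z$ to begin or end with $111$ while the opposite end is $\geq 2$. Otherwise some maximal $1$-block of $z$ has length $2$. In either case $z'$ acquires a factor $e\,1\,f\,1\,1\,g\,1\,h$ with $f,g\geq 2$, and the two overlapping windows $e1f11g$ and $f11g1h$ give the complementary conditions $f\leq g$ and $f\geq g$. Your straddling-$(a+b)$ idea is one instance of this overlapping-window argument, valid only when the relevant $11$-block happens to be adjacent to $a+b$; the parity step is what guarantees a usable $11$-block somewhere in $z'$.
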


\begin{proof}
If condition (a) is not satisfied, then $z$ consists of isolated occurrences
of numbers $\geq 2$, separated by blocks of consecutive $1$'s.  We assume this
in what follows.

If $z$ begins and ends with $1$, then $z'$ has an interior occurrence of $2$,
so (c) is satisfied.  So assume this is not the case.

If $z$ begins $1b$ with $b\geq 2$, then by the previous paragraph it must end
in $c \geq 2$.   Then $z'$ has an occurrence of $(c+1)b$, so (a) is satisfied.
Exactly the same argument works if $z$ ends with $b1$ with $b \geq 2$.
So assume neither of these hold.   

If $z$ has an interior occurrence of $11111$, then $z'$ has an occurrence of
$c11111$, fulfilling condition (b).   If $z$ begins $11111c$ for $c \geq 1$,
it must end with
$d \geq 2$, so $z'$ has an occurrence of $(d+1)1111c$, fulfilling (b).  The
analogous argument holds if $z$ ends $c11111$.   So all maximal $1$-blocks
in $z$ are of length $\leq 4$.

Now we consider the case that $z$ has a maximal block of the form $1111$.
If this occurrence is interior in $z$,
then $z'$ contains the block $b1111c$ for
$b,c \geq 1$, and hence satisfies (b).    
 If $z$ has the prefix $1111$, then $z$ cannot end in $1$ by above.  Hence, since $z$ has even length, it must contain another maximal $1$-block of even length, which must be interior.  Since we have already ruled out the possibility of an interior occurrence of $1111$, it must be an interior $1$-block of size $2$.  But then $z’$ has a block of the form $a1b11c1d$ where $a,d \geq 1$ and $b,c \geq 2$.  As in the previous lemma,
if $b \leq c$, then the block $a1b11c$ fulfills condition (b);
if $b \geq c$, then the block $b11c1d$ fulfills condition (b).
The analogous argument holds if the occurrence of $1111$ is a suffix.
Hence $z$ contains no maximal $1$-block of size $4$.

Thus we may assume that the maximal $1$-blocks in $z$ are of length $1$, $2$, or $3$.

If $z$ has a maximal $1$-block of size $2$, then since $|z|$ is even, it must
have a second maximal $1$-block of size $2$.   Then $z'$ has a factor of the
form $a1b11c1d$ where $a,d \geq 1$ and $b,c \geq 2$, and by the argument above,
this satisfies condition (b).

Hence all the maximal $1$-blocks of $z$ are of size $1$ or $3$.  Hence all the maximal $1$-blocks of $z$ are size $1$ or $3$.  Since $z$ has even length, exactly one of its first and last symbols must be 1.  We know from above that $z$ cannot begin $1b$ or end $b1$ with $b \geq 2$.  So $z$ must either begin $111$ or end $111$.  If $z$ begins $111$, then $z$ ends in $b \geq 2$, and $z'$ contains the block $a1(b+1)11c1d$ for some $a,d \geq 1$ and $b,c \geq 2$.  If $c \leq b+1$, then the block $a1(b+1)11c$ fulfills condition (b); if $b+1 \geq c$, then the block $(b+1)11c1d$
fulfills (b).  If $z$ ends $111$, then an analogous argument holds.
\end{proof}

Applying the two lemmas to the case where $x = u \,
\overline{u}$ and $z = r(u)$ completes the proof of Theorem~\ref{minimal}.
\end{proof}

\section{Enumerating words with only two distinct antisquares}
\label{enum}

In this section we obtain some enumeration results for good words.  Here the notation 
${}^\omega x$ refers to the 
left-infinite word $\cdots xxx$.

\begin{proposition}\label{001011}
If a bi-infinite good word $w$ contains the factor $001011$, then $w={}^\omega0101^\omega$.
\end{proposition}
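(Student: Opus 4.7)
The plan is to analyse the runs of $w$ adjacent to the forced factor $001011$ and, in each bad configuration, to exhibit a minimal antisquare as a factor, using the classification from Theorem~\ref{minimal}. Let $L_0\ge 2$ be the length of the 0-run of $w$ containing the initial $00$, let $L_3\ge 2$ be the length of the 1-run containing the final $11$, and write $L_1=L_2=1$ for the two single-letter runs inside $001011$. Because $0011$, $1100$, $0110$, and $1001$ are minimal antisquares, no two consecutive runs of $w$ can both have length $\ge 2$; in particular, whenever $L_0$ is finite the preceding 1-run $L_{-1}$ has length $1$, and whenever $L_3$ is finite the following 0-run $L_4$ has length $1$.

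A quick preliminary strengthens this: if $L_0$ is finite then $L_0\ge 3$, because $L_0=2$ creates the factor $L_{-1}\cdot 0^2\cdot L_1 = 1\cdot 00\cdot 1 = 1001$; symmetrically, finite $L_3$ forces $L_3\ge 3$ (else $0\cdot 11\cdot 0=0110$). The goal of the remaining work is to rule out every case in which $L_0$ or $L_3$ is finite, using the higher-order minimal antisquares---the conjugates of $0^{k-2}101^{k-2}01$ for $k\ge 5$, which are precisely the factors whose run-length sequence is one of the three cyclic shifts of $(m,1,1,m,1,1)$ with $m\ge 3$.

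The main step is a three-case split, each yielding such a minimal antisquare as a factor of $w$. First, if $L_0$ and $L_3$ are both finite (hence both $\ge 3$), set $m=\min(L_0,L_3)$; using $L_{-1}=L_4=1$ on either side, the factor $1\cdot 0^m\cdot 1\cdot 0\cdot 1^m\cdot 0$ appears in $w$, with run pattern $(1,m,1,1,m,1)$, a cyclic shift of the forbidden $(m,1,1,m,1,1)$. Second, if $L_0=\infty$ and $L_3$ is finite, then taking $L_3$ zeros from the left-infinite tail, followed by the middle $1\cdot 0\cdot 1^{L_3}$, the single $0$ of $L_4$, and the first $1$ of the next 1-run $L_5$ (which exists because $w$ is bi-infinite), yields $0^{L_3}\cdot 1\cdot 0\cdot 1^{L_3}\cdot 0\cdot 1$, which is exactly the minimal antisquare $0^{L_3}101^{L_3}01$. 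Third, if $L_0$ is finite and $L_3=\infty$, then the 0-run $L_{-2}$ preceding $L_{-1}$ exists, and taking one $0$ from $L_{-2}$ together with $L_0$ ones from the right-infinite 1-tail gives the factor $0\cdot 1\cdot 0^{L_0}\cdot 1\cdot 0\cdot 1^{L_0}$, with run pattern $(1,1,L_0,1,1,L_0)$, again a cyclic shift of the forbidden pattern.

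Each of these three cases contradicts the goodness of $w$, so we must have $L_0=L_3=\infty$, which forces $w={}^\omega 0\cdot 10\cdot 1^\omega$. The main technical hurdle is choosing the correct conjugate of $0^{k-2}101^{k-2}01$ in each case; working in run-length coordinates makes this transparent, since each bad configuration matches a different one of the three cyclic shifts of $(m,1,1,m,1,1)$.
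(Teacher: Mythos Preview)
Your Case~1 contains a gap: the factor $1\cdot 0^m\cdot 1\cdot 0\cdot 1^m\cdot 0$ is present only when $L_0=L_3$. If $m=L_0<L_3$, the symbol following the block $1^m$ lies strictly inside the $1^{L_3}$-run and is therefore $1$, not the $0$ you want from $L_4$; symmetrically, if $m=L_3<L_0$, the symbol preceding $0^m$ is another $0$, not the $1$ from $L_{-1}$. You cannot borrow from $L_{-1}$ and $L_4$ simultaneously unless the two long runs coincide, so the displayed antisquare need not occur.

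The repair is easy: drop Case~1 and widen the other two. If $L_3$ is finite and $L_0\ge L_3$, your Case-2 factor $0^{L_3}101^{L_3}01$ is still available---take the last $L_3$ zeros of the $L_0$-block (finite or infinite), then $1,0,1^{L_3}$, then the single $0$ of $L_4$ and a $1$ from $L_5$. If $L_0$ is finite and $L_3\ge L_0$, your Case-3 factor $010^{L_0}101^{L_0}$ works in the same way using $L_{-2}$. These two subcases exhaust everything except $L_0=L_3=\infty$, which is the desired conclusion.

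For comparison, the paper's proof is a three-line maximality argument that does not invoke Theorem~\ref{minimal} at all: take $k\ge 2$ maximal with $0^k101^k$ a factor; if $k<\infty$ then by symmetry $0^k101^k0$ occurs, and the next symbol forces either the antisquare $1100$ or the antisquare $0^k101^k01$. Your run-length approach is more structural and leans on the minimal-antisquare classification, which is a legitimate alternative; but the extra bookkeeping is exactly where the slip crept in.
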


\begin{proof}
Consider a maximal factor of $w$ of the form $0^k101^k$, with $2\leq k < \infty$.
By maximality and by symmetry, we can assume that $w$ contains $0^k101^k0$.
This factor is not extendable to the right:
\begin{itemize}
 \item $0^k101^k00$ contains the antisquare $1100$ as a suffix.
 \item $0^k101^k01$ is an antisquare.
\end{itemize}
This is a contradiction to $k < \infty$, so $w={}^\omega0101^\omega$.
\end{proof}

\begin{theorem}
There are $\Theta(\psi^n)$ good words of length $n$,
where $\psi \doteq 1.465571231876768$ is the supergolden ratio, root of the equation $X^3=X^2+1$.
\end{theorem}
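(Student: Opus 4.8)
The plan is to pass to run-length encodings and exploit Theorem~\ref{minimal}. Writing $z=(r_1,\dots,r_m)=r(w)$, the observations (a), (b), (c) from the proof of Theorem~\ref{minimal} translate membership of $w$ in $L$ into four conditions on $z$: (i) no two consecutive entries are $\ge 2$; (ii) no interior entry equals $2$; (iii) no four consecutive entries equal $1$ (ruling out $010101$ and $101010$); and (iv) $z$ has no factor $a\,1\,b\,c\,1\,d$ with $a\ge c$ and $b\le d$ (ruling out the antisquares of order $\ge 5$). Conditions (i)--(iii) amount to avoiding the finite set $\{0011,1100,0110,1001,010101,101010\}$ and so define a regular language, whereas (iv) compares run lengths and is the source of the non-regular behaviour; pinning the growth rate to $\psi$ hinges on understanding exactly how (iv) cuts down the count.

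First I would isolate the effect of (iv). Under (i)--(iii) the interior entries of $z$ lie in $\{1\}\cup\{3,4,\dots\}$ with no two ``long'' entries ($\ge 3$) adjacent, and a short calculation shows that the only antisquares allowed by (i)--(iii) but killed by (iv) have run-length pattern $(c,1,1,c,1,1)$ with $c\ge 3$; each of these contains $0^3101^3=00010111$ or its complement $1^3010^3=11101000$. I therefore set $G'$ to be the set of good words avoiding the two finite factors $00010111$ and $11101000$ --- equivalently, the good words in which no block of two consecutive length-$1$ runs is flanked on both sides by runs of length $\ge 3$. Being defined by finitely many forbidden factors, $G'$ is regular, and by the preceding remark $G'\subseteq L$.

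Next I would compute the growth rate of $G'$ and thereby obtain the lower bound. In a word of $G'$ the long runs are isolated and every interior maximal block of length-$1$ runs has size $1$ or $3$ (size $2$ between two long runs is exactly what $G'$ forbids, and size $\ge 4$ is forbidden by (iii)). Hence such a word is, up to bounded boundary effects, a concatenation of units ``one long run followed by a $1$-block of size $1$ or $3$'', with length generating function $U(x)=\tfrac{x^{3}}{1-x}\,(x+x^{3})$. The associated sequence construction $1/(1-U)$ has its dominant singularity at the least positive root of $x^{6}+x^{4}+x-1=0$; since $1/\psi$ satisfies $x^{3}+x=1$ one checks directly that it is a root of $x^{6}+x^{4}+x-1$, and as this polynomial is strictly increasing on $(0,\infty)$ it is the only positive root. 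Thus the radius of convergence is $1/\psi$, and since the underlying automaton is strongly connected, Perron--Frobenius gives $|G'\cap\{0,1\}^{n}|=\Theta(\psi^{n})$. As $G'\subseteq L$, this already proves the lower bound $\Omega(\psi^{n})$.

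It remains to prove the matching upper bound, and I expect this to be the crux. Let $R=L\setminus G'$ be the good words that \emph{do} contain a block of two length-$1$ runs flanked by two runs of length $\ge 3$; then $L=G'\sqcup R$, so it suffices to show $|R\cap\{0,1\}^{n}|=o(\psi^{n})$. As in the proof of Proposition~\ref{001011}, a configuration $0^{X}101^{Y}$ with $X,Y\ge 3$ cannot be extended freely, and re-running the $a\,1\,b\,c\,1\,d$ analysis shows that the runs taking part in such length-$2$-block configurations are forced into strictly unimodal patterns with distinct values. Rigid skeletons of this kind with total length $\ell$ number only $2^{O(\sqrt{\ell})}$, while the complementary free portion of the word lies in $G'$ and contributes at most $C\psi^{\,n-\ell}$; summing over the subexponentially many skeletons yields $|R\cap\{0,1\}^{n}|=o(\psi^{n})$, and hence $|L\cap\{0,1\}^{n}|=\Theta(\psi^{n})$. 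The delicate point, and the main obstacle, is this last step: one must show rigorously that the order-$\ge 5$ antisquare condition (iv) confines every length-$2$-block configuration to a subexponential, essentially one-dimensional family, so that the exponential growth is governed entirely by the regular core $G'$.
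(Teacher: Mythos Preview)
Your proposal contains a genuine error in the ``short calculation.'' You assert that the only antisquares allowed by (i)--(iii) have run-length pattern $(c,1,1,c,1,1)$, but Theorem~\ref{minimal} states that the minimal antisquares of order $n\ge 5$ are \emph{all $2n$ conjugates} of $0^{n-2}101^{n-2}01$, and most of those conjugates do not have that run pattern. Concretely, for $n=5$ the conjugate $0010111010$ has run-length encoding $(2,1,1,3,1,1,1)$; it satisfies your conditions (i)--(iii), contains neither $00010111$ nor $11101000$, and even the word $w=00101110100$ (runs $(2,1,1,3,1,1,2)$) satisfies all of (i)--(iv), yet $w$ contains the antisquare $0010111010$ and is therefore not good. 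So (i)--(iv) do \emph{not} characterise $L$, and the regular language whose growth you compute strictly contains your $G'$. Since $G'$ is, by your own definition, a subset of the good words, you have only shown $|G'|\le |R'|=\Theta(\psi^n)$, which yields no lower bound on $|L|$.

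The fix---and the paper's approach---is to forbid the \emph{shorter} factors $001011$ and $110100$ instead of $00010111$ and $11101000$. One checks (and it is genuinely a short calculation, looking at the circular word $0^{n-2}101^{n-2}01$) that every conjugate of each minimal antisquare of order $\ge 5$ contains $001011$ or $110100$; hence the good words avoiding $\{001011,110100\}$ coincide exactly with the binary words avoiding the finite set $F=\{0011,1100,0110,1001,010101,101010,001011,110100\}$, which \emph{is} regular. Its growth rate can then be computed (the paper does this via Pansiot codes, obtaining the recurrence $C_n=C_{n-1}+C_{n-4}+C_{n-6}$ and factoring $X^6-X^5-X^2-1$). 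For the upper bound, your unimodal-skeleton sketch is much looser than needed: Proposition~\ref{001011} (and its proof idea for finite words) shows that a good word containing $001011$ must be a factor of ${}^\omega 0101^\omega$, so there are only $O(n^2)$ such words of length~$n$, which immediately gives $|L\cap\{0,1\}^n|=\Theta(\psi^n)$.
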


\begin{proof}
Let $F=\{0011,1100,0110,1001,010101,101010,001011,110100\}$.
By Theorem~\ref{minimal}, $F$ contains the minimal antisquares of order 2 and 3,
and the minimal antisquares of order at least 4 contain $001011$ or $110100$.
Thus, the binary words avoiding $F$ are exactly the good words that avoid
$\{001011,110100\}$.

By Proposition~\ref{001011}, there are not enough good words containing
$001011$ to contribute to the growth rate of good words.
This also holds for good words containing the symmetric factor $110100$.
Thus, good words and binary words avoiding $F$ have the same growth rate.

To enumerate binary words avoiding $F$, we instead enumerate the `Pansiot
codes' of these words.  If $x = x_1x_2\cdots x_n$ is a binary word, then the 
\emph{Pansiot code} of $x$ is the binary word $p_1p_2\cdots p_{n-1}$ such that
for $i=1,\ldots,n-1$
$$
x_{i+1} = \begin{cases}x_i &\text{ if } p_i = 0;\\
\overline{x_i} &\text{ if } p_i = 1.\end{cases}
$$
For example, the binary word $010$ is the Pansiot code for the two binary words
$0011$ and $1100$.

The Pansiot codes of binary words avoiding $F$ are the binary words avoiding\\ $\{ 010,101,11111,01110\}$.
These words consist of blocks of $0$'s of length at least $2$ and blocks
of $1$'s of length $2$ or $4$.
Consider the number $C_n$ of such words ending with $00$.
They are obtained from shorter words by adding a suffix $0$, $1100$, or $111100$.
From the relation $C_n=C_{n-1}+C_{n-4}+C_{n-6}$, the growth rate
is the positive real root of $X^6=X^5+X^2+1$.
Since $X^6-X^5-X^2-1=(X+1)(X^2-X+1)(X^3-X^2-1)$, this is the root $\psi$ of $X^3=X^2+1$.
\end{proof}

Next we show that the threshold exponent at which the number
of good words becomes exponential is $\tfrac{15}4$.
(For overlap-free words, the threshold is $\tfrac73$; see \cite{Karhumaki&Shallit:2004}.)

\begin{theorem}
Let $w$ be any squarefree word over the alphabet $\{0,1,2\}$.  Apply the map $h$ that
sends
\begin{align*}
0 &\rightarrow 010001 \\
1 &\rightarrow 0100010001 \\
2 &\rightarrow 01000100010001 .
\end{align*}
The resulting word is good, and has exponent at most $\tfrac{15}4$,
and it is exactly $\tfrac{15}4$ if $|w| \geq 5$.
\end{theorem}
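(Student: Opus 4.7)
The proof splits into three parts: show that $h(w)$ is good, bound its critical exponent by $\tfrac{15}{4}$, and exhibit a factor of exponent exactly $\tfrac{15}{4}$ when $|w|\ge 5$.

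For goodness, the key is the run structure of $h(w)$. Writing $h(i)=01\cdot(0001)^{i+1}$, each image begins with a length-$1$ $0$-run followed by $i+1$ length-$3$ $0$-runs separated by isolated $1$s; at any image boundary the suffix $\ldots 1$ meets the prefix $01\ldots$. Consequently, in $h(w)$ every $1$-run has length $1$, every $0$-run has length $1$ or $3$, and any two length-$1$ $0$-runs are separated by at least one length-$3$ $0$-run. I then go through the minimal antisquares listed in Theorem~\ref{minimal}: $0011, 0110, 1100$ are excluded by the absence of $11$; $1001$ is excluded by the absence of a length-$2$ $0$-run; and $010101, 101010$ are excluded by the separation condition on length-$1$ $0$-runs. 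For $n\ge 5$, each conjugate of $0^{n-2}101^{n-2}01$ either contains $1^{n-2}$ intact (impossible since $n-2\ge 3$) or is obtained by cutting inside the $1^{n-2}$ run; in the latter subcase $0^{n-2}$ is intact, forcing $n-2\in\{1,3\}$ and hence $n=5$, but then one of the end-runs $1^b$ and $1^{3-b}$ of the resulting word has length $\ge 2$ (since $b+(3-b)=3$ with both terms $\ge 1$), again impossible.

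For the upper bound $\cexp(h(w))\le \tfrac{15}{4}$, the plan is to use the synchronizing property of $h$ together with \cite[Lemma~23]{Mol&Rampersad&Shallit:2020} (or a non-uniform adaptation). The morphism $h$ is synchronizing: direct inspection shows that $0101$ is not a factor of any $h(i)$, whereas every image boundary $\ldots 01\mid 01\ldots$ produces $0101$, so each occurrence of $0101$ in $h(w)$ localizes a boundary. With this in hand, $\tfrac{15}{4}^+$-freeness of $h(w)$ reduces to a finite check of $\tfrac{15}{4}^+$-freeness of $h(u)$ for all squarefree ternary $u$ up to a bounded length, a computation that can be performed by computer.

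For attainment when $|w|\ge 5$, I first argue that $w$ contains the letter $2$ at a non-final position: otherwise the prefix of length $|w|-1\ge 4$ is binary and squarefree, which is impossible since binary squarefree words have length at most $3$. Hence $w$ has a factor $2a$ with $a\in\{0,1\}$, and $h(w)$ contains $h(2)h(a)$. Writing $h(2)=(0100)^3\cdot 01$ shows that $h(2)$ has length $14$ and period $4$; since $h(a)$ begins with $0$, matching position $11$ of $h(2)$ (also $0$), the length-$15$ prefix of $h(2)h(a)$ has period $4$ and hence exponent exactly $\tfrac{15}{4}$. Combined with the upper bound, this gives $\cexp(h(w))=\tfrac{15}{4}$.

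The main obstacle is the upper bound. While synchronization of $h$ and the overall strategy are transparent, $h$ is not uniform, so one must either carefully adapt the standard finite-check lemma or carry out a direct period-by-period analysis (small periods $p\le 6$ by hand and larger $p$ controlled via synchronization). Pinning down the correct bound on $|u|$ in the finite check, and verifying that no period other than $4$ produces an exponent greater than $\tfrac{15}{4}$, is the most delicate part.
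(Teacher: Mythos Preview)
Your goodness argument via the run structure and the classification of minimal antisquares is correct and more detailed than the paper's ``by inspection''. Your attainment argument is also correct, and in fact more careful than the paper: you explicitly show that any squarefree ternary word of length $\ge 5$ must contain a $2$ at a non-final position (otherwise its length-$(|w|-1)$ prefix would be binary squarefree of length $\ge 4$), which is exactly what is needed for the factor $h(2)0=(0100)^3010$ to appear.

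The substantive divergence is in the upper bound. You propose the standard ``synchronizing morphism plus finite check'' template and correctly observe that $0101$ occurs in $h(w)$ only across image boundaries. This approach is sound in principle, but you leave it as a deferred computation and flag the non-uniformity of $h$ as an obstacle. The paper instead gives a short direct argument that bypasses any computer check: the key observation is that the factor $101$ (not $0101$) occurs in $h(w)$ \emph{only} at image boundaries. If $h(w)$ contained a factor $zzzz'$ of exponent $\ge\tfrac{15}{4}$ with $101$ inside $z$, say $z=x101y$, then the square $(01yx1)^2$ sits in $h(w)$ with $01yx1=h(Z)$ for a factor $Z$ of $w$, forcing a square $ZZ$ in $w$. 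If $z$ avoids $101$, then $z$ lies inside a single $h(a)$, and the only $\tfrac{15}{4}$-power that can arise this way is $(0100)^3010=h(2)0$.

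So your route works but is heavier machinery left partly unexecuted; the paper's route is a two-case argument hinging on the single structural fact about $101$, which you essentially already had in hand when establishing synchronization.
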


\begin{proof}
The goodness of $h(w)$ can be seen by inspection.  Regarding the exponent
$\tfrac{15}4$, suppose that $h(w)$ contains a $\tfrac{15}4$
power $zzzz'$, where $z'$ is a prefix of $z$.  Note that in $h(w)$ the
factor $101$ can only occur at the `boundary' between $h(a)$ and $h(b)$,
where $a,b \in \{0,1,2\}$.  So we have two cases:

Case~1: $z$ contains $101$.  Write $z=x101y$.
Then $h(w)$ contains the square $01yx101yx1$, where $01yx1 = h(Z)$ for some
factor $Z$ of $w$.  Then $w$ contains the square $ZZ$, which is a contradiction.

Case~2: $z$ does not contain $101$.  Clearly $z=h(a)$ or $z=h(a)0$ for some $a \in \{0,1,2\}$
is not possible, so $z$ is contained within some $h(a)$, where $a \in \{0,1,2\}$.
The only such $\tfrac{15}4$-power is $h(2)0=(0100)^3 010$, which establishes the claim.
\end{proof}

\begin{corollary}
There are exponentially many length-$n$ $\frac{15}{4}^+$-free good words.
\end{corollary}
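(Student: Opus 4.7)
The plan is to reduce to the classical exponential lower bound on the number of squarefree ternary words, due to Brandenburg (extending Thue): there exist constants $c>0$ and $r>1$ such that the number of squarefree words of length $k$ over $\{0,1,2\}$ is at least $c\,r^k$.

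By the preceding theorem, the morphism $h$ sends every squarefree ternary word $w$ to a good binary word of exponent at most $\tfrac{15}{4}$, hence in particular $\tfrac{15}{4}^+$-free. I would first verify that $h$ is injective. This is immediate from the observation that $h(a) = (0100)^{a+1}\cdot 01$ for $a \in \{0,1,2\}$, together with a short case check showing that the factor $101$ occurs in $h(w)$ exactly once per boundary between consecutive images $h(a_i)h(a_{i+1})$ (as the straddling pattern $\cdots 01\,|\,01\cdots$) and never inside any single $h(a)$. One can therefore recover $w$ from $h(w)$ by locating the $101$'s and reading off the block lengths between them.

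Since $|h(w)|$ lies in the range from $6|w|$ to $14|w|$, the at least $c\,r^k$ distinct binary words $h(w)$ with $|w| = k$ are distributed among the lengths in $[6k, 14k]$. A pigeonhole argument then produces, for each $k$, some $n_k \in [6k,14k]$ at which the number of good $\tfrac{15}{4}^+$-free binary words of length exactly $n_k$ is at least $c\,r^k/(8k+1)$. Since $n_k \le 14k$, this quantity is bounded below by a constant times $r^{n_k/14}/n_k$, which grows exponentially in $n_k$. As $n_k \to \infty$ with $k$, the growth rate of good $\tfrac{15}{4}^+$-free binary words is at least $r^{1/14} > 1$, establishing the corollary.

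The only substantive step is the injectivity of $h$, which is handled by the short $101$-marker argument sketched above; the rest is a routine appeal to Brandenburg's lower bound combined with pigeonhole, and presents no real obstacle.
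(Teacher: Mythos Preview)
Your proposal is correct and matches the paper's (implicit) argument: the corollary is stated there without proof, as an immediate consequence of the preceding theorem together with the exponential count of ternary squarefree words. You are in fact more careful than the paper, supplying the $101$-marker injectivity argument and the pigeonhole step; the only point left implicit is that passing from exponential growth along the subsequence $(n_k)$ to a lower bound at \emph{every} length $n$ uses that the language is closed under taking factors (so Fekete's lemma applies and the growth rate is a genuine limit), which is routine.
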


To show that there are only polynomially many length-$n$ good
words avoiding $\tfrac{15}4$-powers we need a version of the results in Section~\ref{three}
for finite words rather than bi-infinite words.
Let $g$ and $\varphi$ be defined as in Section~\ref{two} and let $g'$ be the morphism
that maps $0 \mapsto 01$ and $1 \mapsto 00$.

\begin{lemma}\label{g-factorization}
Let $w$ be a $4$-free word of length $\geq 15$ that contains no antisquares other than $01$ and $10$.
Then $w$ can be written as either $w = w_1 g(v) w_2$ or $w = w_1 g'(v) w_2$ for some $v$,
where $|w_1|,|w_2| \leq 5$.
\end{lemma}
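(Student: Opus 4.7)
The plan is to extract a finite analogue of the argument behind Lemma~\ref{lemgfib}. Since $w$ is good, its Pansiot code avoids $\{010, 101, 11111\}$ (which encode the forbidden antisquares of lengths~$4$ and~$6$), so every interior run of $w$ has length~$1$ or~$3$, and no six consecutive length-$1$ runs can occur.

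The first main step is to show that all interior length-$3$ runs are of a single character. Suppose instead that $w$ contains both an interior $1^3$-run and an interior $0^3$-run; by symmetry assume the $1^3$ precedes the $0^3$, and choose them so that no other interior length-$3$ run lies strictly between. Then the characters strictly between must alternate from $0$ to $1$ with even length $\ell$. The cases $\ell=0$ (giving the antisquare $111000$), $\ell=2$ (where extending by one character on each side yields $0 \cdot 111 \cdot 01 \cdot 000 \cdot 1 = 0111010001$, a conjugate of $0001011101$ from Theorem~\ref{minimal}), and $\ell \geq 4$ (forcing the factor $010101$) all produce a forbidden factor. Hence every interior length-$3$ run is uniformly $1^3$ (Case~A) or uniformly $0^3$ (Case~B). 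Moreover, $w$ must contain at least one interior length-$3$ run when $|w| \geq 15$: otherwise the interior is purely alternating of length at most~$5$, and with two boundary runs of length at most~$3$ we obtain $|w| \leq 11$.

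In Case~A the interior run sequence has the shape $L s^{k_1} L s^{k_2} \cdots L$ with $L=1^3$ and each $k_i\in\{1,3,5\}$, which is exactly the run structure of $g(v)$ for a binary word $v$ avoiding $11$ whose $0$-blocks have length in $\{1,2,3\}$. To produce the factorization, let $j$ be the position of the first interior $1^3$-run. Since $w_{j-1}=0$ must occupy an odd local position of $g(v)$ (even local positions are forced to be $1$), the parity of $a := |w_1|$ must match that of $j$, and taking $a \in \{0,1\}$ already suffices. With this choice the pairs $w[a+2i-1..a+2i]$ equal $01$ or $11$, defining $v_i$, so that $w = w_1 g(v) w_2$; the symmetric analysis at the right end produces $w_2$. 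The slack $|w_1|,|w_2|\leq 5$ comfortably accommodates the boundary runs and short alternating tails (longer tails would force $010101$ or the $4$-power $(01)^4$). Case~B is symmetric, yielding $w=w_1 g'(v) w_2$ via the morphism $g'$. The principal obstacle is the case analysis in the uniformity step, where the explicit list of length-$10$ conjugate antisquares from Theorem~\ref{minimal} is indispensable.
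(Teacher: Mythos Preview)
Your approach differs genuinely from the paper's. The paper argues directly: a finite search shows any such $w$ has $0111$ or $0001$ in its first nine characters, and then a short induction proves that after the first $0111$ the word lies in $\{01,0111\}^*\subseteq\{01,11\}^*$ (the three forbidden continuations being $111000$, the antisquare $0111010001$, and $101010$). Your route via run structure, and in particular the ``uniformity'' step showing that all interior length-$3$ runs share a letter, is a valid alternative and that step is carried out correctly.

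The gap is in the factorization step. Fixing $a=|w_1|$ by the parity of $j$ only guarantees that the single position $j-1$ lands at an odd local index; it does not by itself force every even local index to carry a $1$. For instance, if the left boundary run were $000$ then with $a\in\{0,1\}$ one of the first two pairs is $00$, contradicting your assertion that ``the pairs $w[a+2i-1..a+2i]$ equal $01$ or $11$''. In fact a left boundary run $000$ cannot occur in Case~A (any extension forces one of $0011$, $010101$, or the order-$5$ antisquare $0001011101$), but you never establish this, and similar checks are needed for boundary $00$ and for the right end. The later sentence about ``slack $\le 5$ comfortably accommodating the boundary runs'' is not a proof either. (Two smaller slips: the interior run sequence need not begin and end with $L$, and between consecutive $111$-blocks one has $k_i\in\{1,3\}$, not $\{1,3,5\}$.) Your strategy can be completed, but as written the passage from ``interior runs look like this'' to ``$w=w_1g(v)w_2$ with $|w_1|,|w_2|\le 5$'' is asserted rather than shown; the paper's inductive argument after locating $0111$ supplies exactly this missing piece.
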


\begin{proof}
By a finite search, one verifies that any $w$ satisfying the hypotheses of the lemma
has a prefix of length $\leq 9$ that contains either $0001$ or $0111$.  Suppose it contains
$0111$.  Since $w$ avoids the antisquares $0110$ and $1001$ and the $4$-powers $0000$
and $1111$ we can write $w = w_1 0111z$, where $z$ is a prefix of a word in $\{0001,01,0111\}^*$
and $|w_1| \leq 5$.  We claim that $z$ does not contain $0001$.  Note that $|z| \geq 6$.

If $z$ has $0001$ as a prefix, then $w$ contains the antisquare $111000$.
Suppose $z$ has $01$ as a prefix.  If $z$ has $010001$ as a prefix, then $w$ contains
the antisquare $0111010001$.  If $z$ has $01010$ as a prefix, then $w$ contains
the antisquare $101010$, so necessarily $z$ has $010111$ as a prefix.  Finally, it may be
the case that $z$ has $0111$ as a prefix.  Applying this argument repeatedly to the
suffix of $w$ following this new occurrence of $0111$, until we no longer have such
a suffix of length at least $6$, we see that $z$ does not contain $0001$.  It follows that
$w$ can be written as $w = w_1 x w_2$, where $x \in \{01,0111\}^*$ (and hence $x\in\{01,11\}^*$),
and $|w_2| \leq 5$.  Thus $w = w_1 g(v) w_2$ for some $v$, as required.

A similar argument shows that if $w$ contains $0001$ then $w = w_1 g'(v) w_2$ for some $v$,
where $|w_1| \leq 5$ and  $|w_2| \leq 5$.
\end{proof}

In what follows we will consider words of the form $g(v)$; the analysis for
$g'(v)$ is similar.

\begin{lemma}\label{phi-factorization}
Let $n \geq 1$ and $y = g(\varphi^n(x))$ for some binary word
$x$ with $|x| \geq 5$.  If $y$ is $\tfrac{15}4$-free, then $x$ is $4$-free and
can be written in the form $x = px's$ where
$|p|\leq 2$, $|s| \leq 1$ and $x'$ has no $000$ or $11$.
\end{lemma}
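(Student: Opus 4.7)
The plan is to argue both claims by directly extracting forbidden powers out of $y$ itself. The $4$-freeness of $x$ is immediate: since $g$ and $\varphi^n$ are morphisms and preserve powers, a factor $u^4$ in $x$ would push forward to a factor $\bigl(g(\varphi^n(u))\bigr)^4$ in $y$, which has exponent $4>\tfrac{15}{4}$, contradicting the hypothesis.

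For the factorization I would take the canonical candidate $p = x[1..2]$, $s = x[|x|]$, $x' = x[3..|x|-1]$ (well-defined since $|x|\geq 5$), and suppose for contradiction that $x'$ contains $000$ or $11$ as a factor. This means $x$ contains $000$ starting at some position $i \in [3,|x|-3]$, or $11$ at some $i \in [3,|x|-2]$. Either way, write $x = u\,w\,v$ with $w \in \{000,11\}$ and $|u|,|v|\geq 1$, so that
$$y \;\supseteq\; g(\varphi^n(u))\cdot g(\varphi^n(w))\cdot g(\varphi^n(v));$$
the middle factor is either the cube $g(\varphi^n(0))^3$ (of period $P=2|\varphi^n(0)|$) or the square $g(\varphi^n(1))^2$ (of period $Q=2|\varphi^n(1)|$). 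The goal is to use the letters surrounding $w$ to extend this cube or square into a factor of exponent at least $\tfrac{15}{4}$.

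Two structural identities drive the extension. First, the easy induction $\varphi^n(0) = \varphi^{n-1}(0)\,\varphi^n(1)$ shows that $g(\varphi^n(1))$ is a suffix of $g(\varphi^n(0))$ of length $Q$. Second, using $\varphi^n(0) = \varphi^{n-1}(0)\varphi^{n-1}(0)\varphi^{n-1}(1)$ and $\varphi^n(1) = \varphi^{n-1}(0)\varphi^{n-1}(1)$, one verifies by induction that $\varphi^n(0)$ and $\varphi^n(1)$ share a common prefix of length exactly $|\varphi^n(1)|-1$. For the case $w=000$, the $4$-freeness already established forbids $u$ from ending in $0$ and $v$ from beginning with $0$ (otherwise $x$ contains $0^4$); so $g(\varphi^n(u))$ ends with $g(\varphi^n(1))$ and $g(\varphi^n(v))$ begins with $g(\varphi^n(1))$. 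The suffix relation contributes $Q$ symbols of backward extension past the cube, and the common-prefix relation contributes $Q-2$ forward. Writing $a_n=|\varphi^n(0)|$ and $b_n=|\varphi^n(1)|$, the total extension is at least $4b_n-2$, and the short Fibonacci-style inequality $8b_n-4\geq 3a_n$ (verified from the recurrences $a_n=2a_{n-1}+b_{n-1}$ and $b_n=a_{n-1}+b_{n-1}$) shows the extended factor has exponent at least $\tfrac{15}{4}$ for every $n\geq 1$.

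The main obstacle is the analogous case $w=11$, where $4$-freeness only rules out $u$ ending in $11$ and $v$ starting with $11$, leaving four sub-cases for the boundary letters of $u$ and $v$ (plus the degenerate $v=1$). The tightest sub-case is when $u$ ends with $0$ and $v$ begins with $0$: the backward extension uses the suffix $g(\varphi^n(0))$ of $g(\varphi^n(u))$, which agrees with the periodic continuation of the period $g(\varphi^n(1))$ for somewhat more than one full period, thanks to the identity $g(\varphi^n(0))=g(\varphi^{n-1}(0))\,g(\varphi^n(1))$ combined with the observation that the tail of $g(\varphi^{n-1}(0))$ itself has a nontrivial overlap with $g(\varphi^n(1))$; the forward extension is supplied by the common-prefix identity. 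A careful count shows that in this sub-case the produced factor has exponent exactly $\tfrac{15}{4}$ when $n=1$, and strictly greater when $n\geq 2$, contradicting $\tfrac{15}{4}$-freeness in every case. This tight $n=1$ bookkeeping is the delicate step; I would verify it by explicitly writing out $g(\varphi(x))$ for a short witness such as $x=00110$, which produces the factor $(0111)^{15/4}$ in $y$, and then use the recursive identities to see that the extensions only grow for larger $n$.
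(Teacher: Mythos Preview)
Your argument for $4$-freeness and for the $000$ case is correct, though already here you are working harder than needed. The paper avoids tracking anything through $g\varphi^n$: since $\varphi(a)$ ends in $01$ and $\varphi(b)$ begins with $0$ for every letter $a,b$, the word $\varphi(a\,000\,b)$ contains $01\cdot(001)^3\cdot 0=(010)^4$, so $\varphi(x)$ (hence $y$) already contains a $4$-power. No recursion on $n$, no lengths $a_n,b_n$, no inequality $8b_n-4\ge 3a_n$.

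The real divergence is in the $11$ case. The paper again drops to $\varphi(x)$: using the full hypothesis $|u|\ge 2$ (two $\varphi$-blocks to the left) and $|v|\ge 1$ together with $4$-freeness of $x$, one checks that $\varphi(x)$ must contain the fixed word $00101010$. Then a single finite computation finishes both regimes: $g(00101010)$ contains $(1011)^{15/4}$, settling $n=1$; and $\varphi(00101010)$ contains $(01001)^4$, settling every $n\ge 2$ at once. There is no sub-case split on the boundary letters of $u,v$ and no induction on $n$.

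Your route, by contrast, commits to analyzing the factor directly inside $g(\varphi^n(\cdot))$, and this is where the write-up becomes genuinely incomplete. You identify four sub-cases for $(u[-1],v[1])$ but only treat one, asserting it is ``tightest'' without proof; you assert that for $n\ge 2$ ``the extensions only grow'' via unspecified recursive identities; and the overlap you need between the tail of $g(\varphi^{n-1}(0))$ and the tail of $g(\varphi^n(1))$ is never quantified (for $n=1$ it is exactly one symbol, which is what makes the exponent land exactly on $15/4$ --- any slip here breaks the argument). Incidentally, your witness for $x=00110$ should be $(1011)^{15/4}$, not $(0111)^{15/4}$. All of this can be repaired, but it is substantially more bookkeeping than the paper's two-line reduction; you also never actually use $|u|\ge 2$ in the tight sub-case, whereas the paper's approach needs it to produce the factor $00101010$.
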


\begin{proof} 
Clearly $x$ is $4$-free.  Suppose $x$ contains an occurrence of $000$ that
is neither a prefix nor a suffix of $x$.  Then $\varphi(x)$ contains
the $4$-power $01(001)^3 0 = (010)^4$ and hence $y$ contains a $4$-power, which is a contradiction.
Suppose $x$ contains an occurrence of $11$ of the form $x=u11v$, where $|u|\geq 2$
and $|v|\geq 1$.  Then $\varphi(x)$ contains $0101$ and hence, extending this occurrence
of $0101$ to the left and right with blocks $001$ and $01$ and avoiding $4$-powers, we see that $\varphi(x)$
contains $001 0101 0$.  Indeed, extending two blocks to the left and one block to the right suffices.
If $n=1$ then $g(00101010)$ contains the $\tfrac{15}4$-power
$(1011)^3 101$, which is a contradiction.  If $n>1$, then $\varphi(00101010)$ contains the $4$-power
$(01001)^4$ and hence $y$ contains a $4$-power, which is a contradiction.
\end{proof}

\begin{lemma}\label{iterate-factorization}
Let $w$ be a $\tfrac{15}4$-free word of length $\geq 33$ that contains no antisquares
other than $01$ and $10$. Then $w$ can be written in the form
$$w = w_1 G(u_1 \varphi(u_2 \cdots \varphi(u_r \varphi(V) v_r) \cdots v_2) v_1) w_2$$
for some $r$, where $G \in \{g,g'\}$, $|w_1|,|w_2| \leq 5$, $|u_i| \leq 4$,
$|v_i| \leq 3$, for $i = 1,\ldots,r$, and $|V|\leq 4$.
\end{lemma}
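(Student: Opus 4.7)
The plan is an iterative peeling argument, working from the outside in. Since $|w| \geq 33 \geq 15$, Lemma~\ref{g-factorization} yields a factorization $w = w_1\, G(A_0)\, w_2$ with $G \in \{g,g'\}$ and $|w_1|,|w_2| \leq 5$, which accounts for the outermost $G$ together with the boundary words. By symmetry we may assume $G = g$; the case $G = g'$ is handled identically. We then proceed by induction on the length of the current innermost word: at stage $i \geq 0$ we maintain a partial factorization
\[
 w \;=\; w_1\, g\bigl(u_1 \varphi(u_2 \cdots \varphi(A_i) \cdots v_2)\, v_1\bigr)\, w_2
\]
with $|u_j| \leq 4$ and $|v_j| \leq 3$ for $1 \leq j \leq i$, and we either stop (when $|A_i| \leq 4$, setting $r = i$ and $V = A_i$) or peel off one more $\varphi$-layer.

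The inductive step is the peel $A_i = u_{i+1}\, \varphi(A_{i+1})\, v_{i+1}$. Assuming $|A_i| \geq 5$, we apply Lemma~\ref{phi-factorization} with $n = i+1$ to the natural $g(\varphi^{i+1}(\cdot))$-shaped subword of $w$, which inherits $\tfrac{15}{4}$-freeness from $w$; the lemma gives $A_i = p\, x'\, s$ with $|p| \leq 2$, $|s| \leq 1$, and $x'$ avoiding both $000$ and $11$. Any such $x'$ has the form $0^{a_0} 1 0^{a_1} 1 \cdots 1 0^{a_k}$ with $a_j \in \{1,2\}$ for interior $j$ and $a_0, a_k \in \{0,1,2\}$; grouping each interior block $1\, 0^{a_j}$ as the image of a letter under $\varphi$ (reading $a_j = 2 \mapsto 0$ and $a_j = 1 \mapsto 1$) produces $x' = u'\, \varphi(A_{i+1})\, v'$ with $|u'| \leq 1$ (a possible leading $1$) and $|v'| \leq 2$ (the trailing $0^{a_k}$). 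Setting $u_{i+1} := pu'$ and $v_{i+1} := v's$ yields $|u_{i+1}|,|v_{i+1}| \leq 3$, comfortably inside the advertised bounds.

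Since each peel roughly halves the length (so $|A_{i+1}| < |A_i|$), the iteration terminates in at most $O(\log |w|)$ steps, ending at an $A_r$ with $|A_r| \leq 4$, which we take to be $V$. The main obstacle is the bookkeeping required to invoke Lemma~\ref{phi-factorization} at each depth: the word $g(\varphi^{i+1}(A_{i+1}))$ is not literally a factor of $w$ but is flanked by the accumulated $u_j$'s and $v_j$'s from previous peels, and one must check that truncating and re-gluing along these boundary pieces does not introduce new $\tfrac{15}{4}$-powers or new antisquares at any depth. The slightly loose advertised bounds $|u_i| \leq 4$ (rather than the $\leq 3$ produced by the structural analysis above) leave exactly the slack needed to absorb such boundary perturbations, and verifying this absorption carefully is the most delicate part of the argument.
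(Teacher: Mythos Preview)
Your overall strategy---apply Lemma~\ref{g-factorization} once, then repeatedly peel off $\varphi$-layers via Lemma~\ref{phi-factorization}---is exactly the paper's.  But you have invented a difficulty where none exists, and in doing so you have slightly mis-indexed the key lemma and skipped the one place that actually needs a separate argument.

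The ``obstacle'' you describe is imaginary.  Because $g$ and $\varphi$ are morphisms, if $g(\varphi^i(A_i))$ is a factor of $w$ and $A_i = u_{i+1}\,\varphi(A_{i+1})\,v_{i+1}$, then
\[
g(\varphi^i(A_i)) \;=\; g(\varphi^i(u_{i+1}))\; g(\varphi^{i+1}(A_{i+1}))\; g(\varphi^i(v_{i+1})),
\]
so $g(\varphi^{i+1}(A_{i+1}))$ is \emph{literally} a factor of $w$ at every stage.  There is no boundary re-gluing to verify and no need for slack in the bound on $|u_i|$.  Accordingly, the correct instance of Lemma~\ref{phi-factorization} has $x = A_i$ and $n = i$ (not $n = i+1$): the hypothesis ``$g(\varphi^n(x))$ is $\tfrac{15}{4}$-free'' holds simply because $g(\varphi^i(A_i))$ is a factor of $w$.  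The one genuine point you omit is the base case $i = 0$, where Lemma~\ref{phi-factorization} does not apply (it requires $n \ge 1$); the paper handles it in one line: $g(000)$ contains the antisquare $010101$ and $g(11) = 1111$ is a $4$-power, so $A_0$ already avoids $000$ and $11$.  (A small slip: the $\varphi$-images $\varphi(0)=001$ and $\varphi(1)=01$ begin with $0$ and end with $1$, so the blocks in $x'$ are $0^{a_j}1$, not $1\,0^{a_j}$; your stated bounds $|u'|\le 1$, $|v'|\le 2$ are nonetheless correct for the right grouping.)
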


\begin{proof}
By Lemma~\ref{g-factorization}, we can write $w = w_1 G(v) w_2$, where $|w_1|,|w_2| \leq 5$.
Without loss of generality, suppose $G=g$.  Clearly $v$ must be $4$-free.  Furthermore, $v$ does not contain
$000$ or $11$, since otherwise $g(v)$ would contain either the antisquare $010101$ or the $4$-power $1111$.
Thus $v = u_1\varphi(v')v_1$, where $|u_1|,|v_1| \leq 2$ and $|v'| \geq 5$.  We can then apply
Lemma~\ref{phi-factorization} to $g(\varphi(v'))$ to find that $v'$ is $4$-free and can be written in the form
$v' = px's$, where $|p|\leq 2$, $|s| \leq 1$ and $x'$ has no $000$ or $11$.  Then we can write
$v' = u_2 \varphi(v'') v_2$, where $|u_2|\leq 4$ and $|v_2| \leq 3$,
and repeat the process to obtain the desired decomposition.
\end{proof}

\begin{theorem}
There are polynomially many length-$n$ good words avoiding $\frac{15}4$-powers.
\end{theorem}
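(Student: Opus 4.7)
The plan is to apply Lemma~\ref{iterate-factorization} to parameterize each sufficiently long good $\tfrac{15}{4}$-free word by a tuple of short words, and then to count the tuples. By that lemma, any good $\tfrac{15}{4}$-free word $w$ with $|w|\geq 33$ admits a decomposition
\[
w \;=\; w_1\, G\bigl(u_1\,\varphi(u_2\,\varphi(\cdots\,\varphi(u_r\,\varphi(V)\,v_r)\cdots)\,v_2)\,v_1\bigr)\, w_2,
\]
where $G\in\{g,g'\}$, $|w_1|,|w_2|\leq 5$, $|u_i|\leq 4$, $|v_i|\leq 3$, and $|V|\leq 4$. Hence each of $w_1,w_2,V$ has at most $63$ possibilities, each pair $(u_i,v_i)$ has at most $31\cdot 15 = 465$ possibilities, and $G$ has $2$; so for fixed $r$, the number of valid tuples is at most $2\cdot 63^{3}\cdot 465^{r}$.

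Next, I would bound the nesting depth $r$ in terms of $n=|w|$. Define $y_{r+1}=V$ and $y_i = u_i\,\varphi(y_{i+1})\,v_i$ for $i=r,\ldots,1$, so that the argument of $G$ is $y_1$. Because $|\varphi(0)|=3$ and $|\varphi(1)|=2$, we have $|\varphi(s)|\geq 2|s|$ for every binary $s$, and therefore $|y_i|\geq 2|y_{i+1}|$ whenever $y_{i+1}\neq\epsilon$. Passing to a minimal representation (so that no trailing level is a trivial wrapping of the empty word), one iterates this doubling from a nonempty base to obtain $|y_1|\geq 2^{r-1}$; since $G$ is $2$-uniform, $|w|\geq 2|y_1|\geq 2^{r}$, and hence $r\leq \log_2 n$.

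Summing over the allowed values of $r$, the total number of valid tuples producing a word of length at most $n$ is at most
\[
2\cdot 63^{3}\cdot \sum_{r=0}^{\lfloor\log_2 n\rfloor} 465^{r} \;=\; O\bigl(n^{\log_2 465}\bigr),
\]
which is polynomial in $n$. Since every length-$n$ good $\tfrac{15}{4}$-free word with $n\geq 33$ arises from at least one such tuple (multiple tuples per word only strengthen an upper bound), and words of length less than $33$ contribute only a constant, the total count is polynomial in $n$. The main subtlety is the logarithmic depth bound: one must work with a minimal representation to ensure $\varphi$ is not being applied to empty strings, so that the expansion factor of $\varphi$ genuinely forces $r=O(\log n)$; otherwise the counting step would be vacuous.
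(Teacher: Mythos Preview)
Your argument is correct and follows essentially the same route as the paper: apply Lemma~\ref{iterate-factorization}, bound the number of choices for the short pieces, and observe that the nesting depth $r$ is $O(\log n)$ because $\varphi$ at least doubles length. You are in fact more explicit than the paper, which simply asserts the existence of a constant $\rho$ with $r\le\rho\log n$; your handling of the minimality issue (to rule out trailing empty levels) and your explicit constants are fine, the slight overcount of $V$ (you allow $63$ where $|V|\le 4$ gives $31$) only loosening an upper bound.
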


\begin{proof}
Let $w$ be such a word of length $n$, where $n \geq 33$.  By Lemma~\ref{iterate-factorization}
$w$ can be written in the form
$$w = w_1 G(u_1 \varphi(u_2 \cdots \varphi(u_r \varphi(V) v_r) \cdots v_2) v_1) w_2$$
for some $r$, where $G \in \{g,g'\}$, $|w_1|,|w_2| \leq 5$, $|u_i| \leq 4$,
$|v_i| \leq 3$, for $i = 1,\ldots,r$, and $|V|\leq 4$.  Suppose $G=g$ and under this assumption
let $A$ (resp.~$B,C,D,E$) be the maximum number of possible choices for
$w_1$ (resp.~$w_2,u_i,v_i,V$).  Then the number of words $w$ is at most
$AB(CD)^rE$.  There is a constant $\rho$ such that $r \leq \rho\log n$,
so the number of words $w$ is at most $ABEn^{\rho\log(CD)}$.
A similar calculation applies when $G=g'$.
\end{proof}

\section{Further work}

In this paper we have studied antisquares.  This situation has an obvious generalization to patterns with morphic and antimorphic permutations, as studied in \cite{Currie&Manea&Nowotka:2015}.   This could be the subject of a future study.

A companion paper to this one is
\cite{Currie&Dvorakova&Ochem&Opocenska&Rampersad&Shallit:2023}, which investigates
complement avoidance in binary words.

\end{document}